\DeclareFontFamily{T1}{pzc}{}
\DeclareFontShape{T1}{pzc}{m}{it}{<-> [1.2] pzcmi8t}{}
\DeclareMathAlphabet{\mathpzc}{T1}{pzc}{m}{it}
\newtheorem{theorem}{Theorem}
\newtheorem{definition}{Definition}
\newtheorem{assump}{Assumption}
\newtheorem{proposition}{Proposition}
\newtheorem{remark}{Remark}
\newtheorem{prob}{Problem}
\newtheorem{lemma}{Lemma}
\newcommand{\norm}[1]{\left\lVert{#1}\right\rVert}
\newcommand{\abs}[1]{\left\lvert{#1}\right\rvert}
\newcommand{\pmat}[1]{\begin{pmatrix}#1\end{pmatrix}}
\renewcommand{\geq}{\geqslant}
\renewcommand{\leq}{\leqslant}
\newcommand{\tendsto}{\rightarrow}
\newcommand{\R}{\mathds{R}}
\newcommand{\N}{\mathds{N}}
\renewcommand{\P}{\mathcal{P}}
\newcommand{\Sw}{\mathcal{S}}
\newcommand{\KL}{\mathcal{KL}}
\newcommand{\Kinfty}{\mathcal{K}_{\infty}}
\newcommand{\Nsw}{\mathrm{N}}
\newcommand{\NswS}{\mathrm{N}^{\mathrm{S}}}
\newcommand{\NswU}{\mathrm{N}^{\mathrm{U}}}
\newcommand{\TswS}{\mathrm{T}^{\mathrm{S}}}
\newcommand{\TswU}{\mathrm{T}^{\mathrm{U}}}
\title[IOSS and state-norm estimation under restricted switching]{On stability and state-norm estimation of switched systems\\under restricted switching}
\author{Atreyee Kundu}
\address{Department of Electrical Engineering,\\Indian Institute of Technology Kharagpur,\\West Bengal - 721302, India,\\ E-mail: atreyee@ee.iitkgp.ac.in}
\thanks{A part of this work was presented at the 60th IEEE Conference on Decision and Control, Texas, Austin (held online), December 2021, see \cite{def}.}
\keywords{Switched systems, nonlinear systems, restricted switching, input/output-to-state stability, state-norm estimation, multiple Lyapunov-like functions}
\date{\today}
\begin{document}

	\begin{abstract}
     This paper deals with the analysis of input/output-to-state stability (IOSS) and construction of state-norm estimators for continuous-time switched nonlinear systems under restricted switching. Our contributions are twofold. First, given a family of systems, possibly containing unstable dynamics, a set of admissible switches between the subsystems and admissible minimum and maximum dwell times on the subsystems, we identify a class of switching signals that obeys the given restrictions and preserves IOSS of the resulting switched system.  Second, we design a class of state-norm estimators for switched systems under our class of stabilizing switching signals. These estimators are switched systems themselves with two subsystems --- one stable and one unstable. The key apparatus for our analysis is multiple Lyapunov-like functions. A numerical example is presented to demonstrate the results.
	\end{abstract}
%============================================
    \maketitle
%============================================
\section{Introduction}
\label{s:intro}
    %A \emph{switched system} has two ingredients --- a family of systems and a switching signal. The \emph{switching signal} selects an \emph{active subsystem} at every instant of time, i.e., the system from the family that is currently being followed \cite[Section 1.1.2]{Liberzon}. %Such systems find wide applications in power systems and power electronics, automotive control, aircraft and air traffic control, network and congestion control, etc. \cite[p.\ 5]{Sun}.
   % In this paper we focus on switched systems whose subsystems are nonlinear. 
    %It is well-known that 
    The design of state estimators for switched nonlinear systems \cite[Section 1.1.2]{Liberzon} is a numerically difficult task. In the absence of an estimate of full state, an estimate of the norm (i.e., the magnitude) of the system state is often useful in practice, see e.g., \cite{Astolfi-Praly_2005, Krichman_2001,Sontag-Wang_1997} for a detailed discussion. Construction of state-norm estimators for switched systems is closely related to input/output-to-state stability (IOSS) of these systems. IOSS property of a switched system implies that irrespective of the initial state, if the inputs and the observed outputs are small, then the state of the system will become small eventually. In this note we study IOSS and state-norm estimation for continuous-time switched nonlinear systems whose switching signals obey pre-specified restrictions on admissible switches between the subsystems and admissible minimum and maximum dwell times on the subsystems. Such restrictions on switching signals arise in many engineering applications, see e.g., \cite[Remark 1]{abc} for examples.
      
    IOSS of switched differential inclusions under arbitrary switching is studied using common IOSS Lyapunov functions in \cite{Mancilla_2005}.  In \cite{Garcia_2002} the authors prove the existence of state-norm estimators for switched nonlinear systems that are IOSS under arbitrary switching, by employing converse Lyapunov theorem and association of the switched system to a certain nonlinear system whose exogenous inputs take values in a compact set. IOSS and state-norm estimation of a class of hybrid systems that admits a Lyapunov function satisfying an IOSS relation both along the flow and during the jumps, is addressed in \cite{Sanfelice_2010}. In \cite{Liberzon_2012} the authors study IOSS and state-norm estimation of switched systems under average dwell time switching with constrained activation of unstable subsystems \cite[Chapter 3]{Liberzon}. The proposed state-norm estimators are switched systems with one input-to-state stable (ISS) and one unstable subsystem. IOSS of impulsive switched systems both with stabilizing and destabilizing impulses is addressed in \cite{Li_2018}. Recently in \cite{Liu_2022a} 
    ISS and an integral version of it are studied using nonlinear supply functions. Sufficient conditions for integral ISS of switched systems with jumps under slow switching is proposed in \cite{Liu_2022b}.   %Stability conditions for both stabilizing and destabilizing impulses are presented by employing Lyapunov functions and average dwell time switching conditions. 
%
    %In this note we study IOSS and state-norm estimation for continuous-time switched nonlinear systems whose switching signals obey pre-specified restrictions on admissible switches between the subsystems and admissible minimum and maximum dwell times on the subsystems. Such restrictions on switching signals arise in many engineering applications, see e.g., \cite[Remark 1]{abc} for examples.
     %For instance, (A) A distinction between admissible and inadmissible switches captures situations where switches between certain subsystems may be prohibited (e.g., order in automotive gear switching). (B) A restriction on minimum dwell time on subsystems arises in situations where actuator saturations may prevent switching frequency beyond a certain limit. Also, in order to switch from one component to another, a system may undergo certain operations of non-negligible durations resulting in a minimum dwell time restrictions on subsystems. (C) Restricted maximum dwell time is natural to systems whose components need regular maintenance or replacements (e.g., aircraft carriers, MEMS systems). Moreover, systems dependent on diurnal or seasonal changes (e.g., components of an electricity grid) have inherent restrictions on admissible dwell time. 
    
    Our problem setting differs from the existing literature in our choice of the class of switching signals. Indeed, instead of considering all switching signals (arbitrary switching) or a set of stabilizing elements from the set of all switching signals (average dwell time switching), we restrict our attention to a pre-specified subset of the set of all switching signals. We employ multiple Lyapunov-like functions \cite{Branicky_1998} as the key apparatus for our analysis. We show that if (a) it is allowed to switch from every unstable subsystem to a stable subsystem, and (b) a set of scalars computed from Lyapunov-like functions corresponding to the individual subsystems together with an admissible choice of dwell times on the subsystems satisfy a certain inequality, then every switching signal that dwells on the subsystems for the above admissible durations of time and does not activate unstable subsystems in any two consecutive switching instants, preserves IOSS of a continuous-time switched nonlinear system. We also provide a set of sufficient conditions on multiple Lyapunov-like functions and the given ranges of dwell times on the subsystems such that the inequality mentioned in (b) above, is satisfied. In the absence of outputs (resp., also inputs), our class of switching signals ensures ISS (resp., GAS) of a switched system.
    
    Further, we construct a class of state-norm estimators for a switched system operating under our class of stabilizing switching signals. These estimators are switched systems themselves with two subsystems --- one ISS and one unstable. The key apparatus for our construction of state-norm estimators for switched systems is suitable design of the ISS (resp., unstable) dynamics as functions of the rate of decay (resp., growth) of the Lyapunov-like functions corresponding to the IOSS and unstable subsystems, norm of the exogenous inputs and norm of the outputs of the switched system for which estimation is required. Our design of a state-norm estimator is independent of the exact switching instants and destinations of the switched system that the estimator is constructed for. To the best of our knowledge, this is the first instance in the literature where IOSS \emph{and} state-norm estimation of continuous-time switched nonlinear systems is studied under pre-specified restrictions on admissible switches between the subsystems and admissible dwell times on the subsystems.
    
    %The remainder of this note is organized as follows: In Section \ref{s:prob_stat} we formulate the problem under consideration. We catalog a set of preliminaries for our results in Section \ref{s:prelims}. Our results appear in Section \ref{s:mainres}. We also discuss various aspects of our results in this section. A numerical example is presented in Section \ref{s:num_ex}. We conclude in Section \ref{s:concln} with a brief discussion of future research directions. In Section \ref{s:all_proofs} proofs of our results are presented in a consolidated manner.
    
    {\bf Notation}. 
   \(\R\) is the set of real numbers.
   \(\norm{\cdot}\) denotes the Euclidean norm.
   For any interval \(I\subseteq[0,\infty[\), \(\norm{\cdot}_{I}\) is the essential supremum norm of a map from \(I\) into some Euclidean space.
   For a statement \(A\), the indicator function \(\mathds{1}_{\{A\}}\) takes the value \(1\) (resp., \(0\)) if the statement \(A\) is true (resp., false). 
   For a real number \(a\), we denote by \(\lfloor a\rfloor\) the largest integer smaller than or equal to \(a\). %For a set \(S\), we denote its cardinality by \(\abs{S}\).
%=============================================
\section{Problem statement}
\label{s:prob_stat}
    We consider the \emph{switched system} \cite[\S1.1.2]{Liberzon}
	   \begin{align}
    \label{e:swsys}
    \begin{aligned}
        \dot{x}(t) &= f_{\sigma(t)}(x(t),v(t)),\\
        y(t) &= h_{\sigma(t)}(x(t)),
    \end{aligned}
    \:\:x(0) = x_{0},\:\:t\geq 0,
    \end{align}
    generated by (a) a family of continuous-time nonlinear systems
    \begin{align}
    \label{e:family}
    \begin{aligned}
        \dot{x}(t) &= f_{p}(x(t),v(t)),\\
        y(t) &= h_{p}(x(t)),
    \end{aligned}
    \:\:x(0) = x_{0},\:\:p\in\P,\:\:t\geq 0,
    \end{align}
    where \(x(t)\in\R^{d}\), \(v(t)\in\R^{m}\) and \(y(t)\in\R^{p}\) are the vectors of states, inputs and outputs at time \(t\), respectively, and \(\P = \{1,2,\ldots,N\}\) is an index set, and
    (b) a \emph{switching signal} \(\sigma:[0,+\infty[\to\P\), which is a piecewise constant function that selects at each time \(t\), the index of the {active subsystem}, i.e., the system from the family \eqref{e:family} that is currently being followed.
     By convention, \(\sigma\) is assumed to be continuous from right and having limits from the left everywhere. Let \(\Sw\) denote the set of all switching signals. We assume that for each \(p\in\P\), \(f_{p}\) is locally Lipschitz, \(f_{p}(0,0) = 0\) and \(h_{p}\) is continuous, \(h_{p}(0) = 0\); the exogenous inputs are Lebesgue measurable and essentially bounded. Thus, a solution to the switched system \eqref{e:swsys} exists in Carath\'eodory sense for some non-trivial time interval containing \(0\) \cite[Chapter 2]{Filippov}.
     
      Let \(\P_{S}\) and \(\P_{U}\) denote the sets of indices of IOSS and unstable subsystems, respectively, \(\P = \P_{S}\sqcup\P_{U}\), and \(E(\P)\) denote the set of all pairs \((p,q)\) such that it is allowed to switch from subsystem \(p\) to subsystem \(q\), \(p,q\in\P\), \(p\neq q\). We let \(0=:\tau_{0}<\tau_{1}<\cdots\) be the \emph{switching instants}; these are the points in time where \(\sigma\) jumps. Let \(\delta\) and \(\Delta\) denote the admissible minimum and maximum dwell time on all subsystems \(p\in\P\), respectively.
    \begin{definition}
    \label{d:adm-sw}
    \rm{
        A switching signal \(\sigma\in\Sw\) is called \emph{admissible} if it obeys the following conditions:%\footnote{Throughout this note we will assume that the set of admissible switches is such that for every subsystem \(p\in\P\), there is at least one subsystem \(q\in\P\) such that \((p,q)\in E(\P)\). Notice that if there exists a subsystem \(p\in\P\) such that there is no subsystem \(q\in\P\) with \((p,q)\in E(\P)\), then an admissible switching signal, \(\sigma\), that activates the subsystem \(p\) at time \(t'\), is undefined beyond time \(t'+\Delta\).}
        \(\bigl(\sigma(\tau_i),\sigma(\tau_{i+1})\bigr)\in E(\P)\) and 
            \(\tau_{i+1}-\tau_{i} \in [\delta,\Delta]\),
        \(i=0,1,2,\ldots\).
    }
    \end{definition}

    Let \(\Sw_{\mathcal{R}}\subseteq\Sw\) denote the set of all admissible switching signals \(\sigma\). 
    We will first study IOSS of the switched system \eqref{e:swsys} under the elements of \(\Sw_{\mathcal{R}}\). %Recall that
     \begin{definition}{\cite[Appendix A.6]{Liberzon}}
    	\label{d:ioss-cont}
	\rm{
        		The switched system \eqref{e:swsys} is \emph{input/output-to-state stable (IOSS)} under a switching signal \(\sigma\in\mathcal{S}\) if there exist class \(\mathcal{K}_{\infty}\) functions \(\alpha\), \(\chi_{1}\), \(\chi_{2}\) and a class \(\mathcal{KL}\) function \(\beta\) such that for all inputs \(v\) and initial states \(x_{0}\), we have for all \(t\geq 0\) %\footnote{\(\mathcal{K} := \{\phi\::\:[0,+\infty[\to[0,+\infty[\:\:\big|\:\:\phi\:\:\text{is continuous, strictly}$\\$\text{increasing},\:\:\phi(0) = 0\}$, $\KL := \{\phi\::\:[0,+\infty[^{2}\to[0,+\infty[\:\:\big|\:\:\phi(\cdot,s)\in\mathcal{K}\:\:\text{for each}\:\:s\:\:\text{and}\:\:\phi(r,\cdot)\searrow 0\:\:\text{as}\:\:s\nearrow+\infty\:\:\text{for each}\:\:r\}$, $\Kinfty := \{\phi\in\mathcal{K}\:\:\big|\:\:\phi(r)\to+\infty\:\:\text{as}\:\:r\to+\infty\}$.},\footnote{We suppress the dependence of \(x\) and \(y\) on \(\sigma\) for notational simplicity.}
        		\begin{align}
        		\label{e:ioss-cont}
            		\alpha(\norm{x(t)}) \leq \beta(\norm{x_{0}},t) + \chi_{1}(\norm{v}_{[0,t]}) + \chi_{2}(\norm{y}_{[0,t]}).%\:\:\text{for all}\:t\geq 0.
        		\end{align}
       % for all \(t\geq 0\).  %The ``uniformity'' is with respect to the functions \(\alpha\), \(\beta\), \(\chi_{1}\), \(\chi_{2}\) which are independent of which switching signal in the class is being followed. 
        If \(\chi_{2}\equiv 0\), then \eqref{e:ioss-cont} reduces to input-to-state stability (ISS) of \eqref{e:swsys} \cite[Appendix A.6]{Liberzon}, and if also \(v\equiv 0\), then \eqref{e:ioss-cont} reduces to global asymptotic stability (GAS) of \eqref{e:swsys} \cite[Appendix A.1]{Liberzon}.
      }
    \end{definition}
    
    We will first identify the elements \(\sigma\in\Sw_{\mathcal{R}}\subseteq\Sw\) that preserve IOSS of the switched system \eqref{e:swsys}. %More precisely, the following problem is under consideration:
      \begin{prob}
     \label{prob:main1}
        Given a family of systems \eqref{e:family}, the set of admissible switches between the subsystems, \(E(\P)\), and the admissible minimum and maximum dwell times, \(\delta\) and \(\Delta\), on the subsystems \(p\in\P\), respectively, identify a class of admissible switching signals \(\tilde{\Sw}_{\mathcal{R}}\subseteq\Sw_{\mathcal{R}}\) that ensures IOSS of the switched system \eqref{e:swsys}.
     \end{prob}
     
     In the sequel we will call a switching signal \(\sigma\) that preseves IOSS of the switched system \eqref{e:swsys} as a \emph{stabilizing switching signal}. %Towards solving Problem \ref{prob:main1}, we will employ multiple Lyapunov-like functions as the main apparatus for our analysis and characterize \(\tilde{\Sw}_{\mathcal{R}}\) based on certain subclasses of admissible switches between the subsystems and admissible dwell times on the subsystems.
     Our second topic of interest is the design of state-norm estimators for the switched system \eqref{e:swsys}. 
     
     Consider a system
     \begin{align}
     \label{e:state_norm}
        \dot{z}(t) = g(z(t),v(t),y(t)),\:\:z(0)=z_0,\:\:t\geq 0
     \end{align}
     whose inputs are the input \(v\) and output \(y\) of the switched system \eqref{e:swsys}, and \(g\) is a locally Lipschitz function. %Recall that
     \begin{definition}{\cite[Definition 1]{Liberzon_2012}}
     \label{d:norm_est}
     \rm{
        The system \eqref{e:state_norm} is called a \emph{state-norm estimator} for the switched system \eqref{e:swsys}, if the following conditions hold:
        %\begin{enumerate}[label = \alph*), leftmargin=*]
        (a) the system \eqref{e:state_norm} is ISS with respect to \((v,y)\), and
        (b) there exist a class \(\KL\) function, \(\overline{\beta}\) and a class \(\Kinfty\) function, \(\overline{\chi}\), such that for all initial states, \(x_0\) for \eqref{e:swsys} and \(z_0\) for \eqref{e:state_norm}, and all inputs \(v\), we have
                \begin{align}
                \label{e:norm_condn}
                    \norm{x(t)}\leq\overline{\beta}(\norm{x_0}+\norm{z_0},t)+\overline{\chi}(\norm{z(t)})\:\:\text{for all}\:\:t\geq 0.
                \end{align}
        %\end{enumerate}
        }
     \end{definition}
     We will solve the following problem:
       \begin{prob}
     \label{prob:main2}
        Given a family of systems \eqref{e:family}, the set of admissible switches between the subsystems, \(E(\P)\), and the admissible minimum and maximum dwell times, \(\delta\) and \(\Delta\), on the subsystems \(p\in\P\), respectively, design a state-norm estimator \eqref{e:state_norm} for the switched system \eqref{e:swsys}.
     \end{prob}
     
     %Towards solving Problem \ref{prob:main2}, we rely on the class of stabilizing switching signals, \(\tilde{\Sw}_{\mathcal{R}}\), identified as a solution to Problem \ref{prob:main1}. 
     We will show that for the switched system \eqref{e:swsys} operating under the elements of \(\tilde{\Sw}_{\mathcal{R}}\), there exist state-norm estimators, which are switched systems themselves, and discuss the design of the same. %In particular, these estimators turn out to be switched systems themselves. 
     
    %Prior to presenting our solutions to Problems \ref{prob:main1} and \ref{prob:main2}, we catalog a set of preliminaries.
%==================================================================================================
\section{Preliminaries}
\label{s:prelims}
%\subsection{The family of systems}
%\label{ss:family-des}
    %We will employ multiple Lyapunov-like functions in our analysis. The following assumptions 
     \begin{assump}
    \label{assump:key1}
    \rm{
        There exist class \(\Kinfty\) functions \(\underline{\alpha}\), \(\overline{\alpha}\), \(\gamma_{1}\), \(\gamma_{2}\), continuously differentiable functions \(V_{p}:\R^{d}\to[0,+\infty[\), \(p\in\P\), and constants \(\R\ni\lambda_s,\lambda_u > 0\), such that for all \(\xi\in\R^{d}\) and \(\eta\in\R^{m}\), the following hold:
        \begin{align}
        \label{e:key-prop1}
            \underline{\alpha}(\norm{\xi})&\leq V_{p}(\xi)\leq\overline{\alpha}(\norm{\xi}),\\
        \label{e:key-prop2}
          \frac{\partial V_p}{\partial \xi}f_p(\xi,\eta)&\leq-\lambda_{s}V_{p}(\xi) +\gamma(\eta,h_p(\xi)),\:p\in\P_S,\\
        \label{e:key_prop4} \frac{\partial V_p}{\partial \xi}f_p(\xi,\eta)&\leq\lambda_{u}V_{p}(\xi) + \gamma(\eta,h_p(\xi)),\:p\in\P_U,
        \end{align}
        where \(\gamma(\eta,h_p(\xi)) = \gamma_{1}\bigl(\norm{\eta}\bigr) + \gamma_{2}\bigl(\norm{h_{p}(\xi)}\bigr)\).
        }
    \end{assump}
     \begin{assump}
    \label{assump:key2}
    \rm{
        For each \((p,q)\in E(\P)\) there exist \(\mu {\geq 1}\) such that the functions \(V_p\) and \(V_q\) are related as follows:
        \begin{align}
        \label{e:key-prop3}
            V_{q}(\xi)\leq\mu V_{p}(\xi)\:\:\text{for all}\:\:\xi\in\R^{d}.
        \end{align}
        }
    \end{assump}

     The functions \(V_{p}\), \(p\in\P\), are called the \emph{(multiple) IOSS-Lyapunov-like functions}. Assumptions \ref{assump:key1}-\ref{assump:key2} capture their temporal properties. Condition \eqref{e:key-prop2} is equivalent to the IOSS property for IOSS subsystems \cite{Krichman_2001, Sontag-Wang_1995, Liberzon_2012} and condition \eqref{e:key_prop4} is equivalent to the unboundedness observability property for the unstable subsystems \cite{Angeli-Sontag_1999, Sontag-Wang_1995, Liberzon_2012}. The scalars \(\lambda_{s}\) (resp., \(\lambda_u\)) provide a quantitative measure of stability (resp., instability) of the systems in \eqref{e:family}. Condition \eqref{e:key-prop3} restricts the class of Lyapunov-like functions to be linearly comparable.
%\subsection{The switching signals}
%\label{ss:sw-sig}
    %We employ the following notation to express various parameters related to a switching signal \(\sigma\).  
    
    We are now in a position to present our results. Proofs of them are presented in a consolidated manner in Section \ref{s:all_proofs}. {\color{black}Owing to space limitations, detailed computations involved in the proofs are presented in \cite{pqr}.}
%=======================================================
\section{Main results}
\label{s:mainres}
\subsection{Input/output-to-state stability of switched systems}
\label{ss:ioss}
      We will operate under the following assumptions:
    \begin{assump}
    \label{a:adm-sw}
    \rm{
        For every subsystem \(p\in\P_U\), there exists a subsystem \(q\in\P_S\) such that \((p,q)\in E(\P)\).
    }
    \end{assump}
    \begin{assump}
    \label{a:adm-dw}
    \rm{
        There exist scalars \(\check\delta\), \(\hat\Delta\in[\delta,\Delta]\) such that
        \begin{align}
        \label{e:maincondn}
            -\lambda_s \frac{\check\delta}{\Delta} + \lambda_s \frac{\check\delta}{2\delta} + \lambda_u \frac{\hat\Delta}{2\delta} + (\ln\mu) \frac{1}{\delta} < 0,
        \end{align}
        where \(\lambda_s\), \(\lambda_u\) and \(\mu\) are as described in Assumptions \ref{assump:key1} and \ref{assump:key2}, respectively.
    }
    \end{assump}
    
    Assumption \ref{a:adm-sw} ensures that it is possible to switch from every unstable subsystem to a stable subsystem. Assumption \ref{a:adm-dw} deals with the satisfaction of an inequality involving a set of scalars related to the Lyapunov-like functions of the individual subsystems and certain admissible choice of dwell times on the subsystems.

    \begin{definition}
    \label{d:adm_subset}
    Let \(\Sw'_{\mathcal{R}}\) be the set of all switching signals \(\sigma\in\Sw_{\mathcal{R}}\) that satisfy the following conditions:
    \begin{enumerate}[label = (C\arabic*), leftmargin = *]
        \item\label{prop3} both \(\sigma(\tau_i)\in\P_U\) and \(\sigma(\tau_{i+1})\in\P_U\) are not true, \(i=0,1,2,\ldots\),
        \item\label{prop1} \(\tau_{i+1}-\tau_{i}\in[\check\delta,\Delta]\) whenever \(\sigma(\tau_i)\in\P_S\), \(i=0,1,2,\ldots\), and
         \item\label{prop2} \(\tau_{i+1}-\tau_{i}\in[\delta,\hat\Delta]\) whenever \(\sigma(\tau_i)\in\P_U\), \(i=0,1,2,\ldots\).
    \end{enumerate}
    \end{definition}
    The elements of \(\Sw'_{\mathcal{R}}\subseteq \Sw_{\mathcal{R}}\) do not activate unstable subsystems on any two consecutive switching instants, do dwell on the stable subsystems for at least \(\check\delta\) and at most \(\Delta\) units of time, and dwell on the unstable subsystems for at least \(\delta\) and at most \(\hat\Delta\) units of time. %In other words, the elements of \(\Sw'_{\mathcal{R}}\) obey additional (with respect to \(\Sw_{\mathcal{R}}\)) restrictions in the choices of (a) switching destinations in terms of choice of switching destinations from the admissible ones, and (b) dwell times on the subsystems from the admissible ranges. 
    The following result shows that the elements of \(\Sw'_{\mathcal{R}}\) are stabilizing.
    \begin{theorem}
    \label{t:mainres}
    {\it{
        %Consider a family of systems \eqref{e:family}. Let the set of admissible switches between the subsystems, \(E(\P)\), and the admissible minimum and maximum dwell times, \(\delta\) and \(\Delta\), on the subsystems \(p\in\P\), respectively, be given. 
        Suppose that Assumptions \ref{assump:key1}-\ref{a:adm-dw} hold. Then the switched system \eqref{e:swsys} is input/output-to-state stable (IOSS) under every switching signal \(\sigma\in\Sw'_{\mathcal{R}}\).
        }}
    \end{theorem}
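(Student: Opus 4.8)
The plan is to propagate the multiple Lyapunov-like functions $V_p$ along a trajectory and distill a uniform exponential estimate that I then repackage as \eqref{e:ioss-cont}. Fix $\sigma\in\Sw'_{\mathcal{R}}$ with switching instants $0=\tau_0<\tau_1<\cdots$ and set $W(t)\Let V_{\sigma(t)}(x(t))$; write $g(s)\Let\gamma_1(\norm{v(s)})+\gamma_2(\norm{h_{\sigma(s)}(x(s))})$ for the forcing term, so that $\norm{g}_{[0,t]}\le\gamma_1(\norm{v}_{[0,t]})+\gamma_2(\norm{y}_{[0,t]})$ because $y=h_{\sigma}(x)$. On a stable interval $[\tau_i,\tau_{i+1})$ inequality \eqref{e:key-prop2} gives $\dot W\le-\lambda_s W+g$, and on an unstable one \eqref{e:key_prop4} gives $\dot W\le\lambda_u W+g$. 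The comparison lemma then yields, per interval, $W(\tau_{i+1}^{-})\le e^{-\lambda_s(\tau_{i+1}-\tau_i)}W(\tau_i)+\tfrac{1}{\lambda_s}\norm{g}_{[0,t]}$ on a stable interval and $W(\tau_{i+1}^{-})\le e^{\lambda_u(\tau_{i+1}-\tau_i)}W(\tau_i)+\tfrac{1}{\lambda_u}\bigl(e^{\lambda_u\hat\Delta}-1\bigr)\norm{g}_{[0,t]}$ on an unstable one, where the upper dwell bounds in (C2)--(C3) keep the forced coefficients finite; Assumption \ref{assump:key2} supplies $W(\tau_{i+1})\le\mu\,W(\tau_{i+1}^{-})$ at each switch.

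Next I would chain these one-step estimates over $[0,\tau_k)$. This produces $W(\tau_k)\le\Phi(0,\tau_k)\,W(0)+\bigl(\textstyle\sum_{j}\Phi(\tau_{j+1},\tau_k)\,c_j\bigr)\norm{g}_{[0,\tau_k]}$, with accumulated homogeneous factor $\Phi(\tau_a,\tau_b)=\mu^{\,b-a}\exp\!\bigl(-\lambda_s\TswS(\tau_a,\tau_b)+\lambda_u\TswU(\tau_a,\tau_b)\bigr)$, where $\TswS$ and $\TswU$ are the total stable and unstable durations in the window and $c_j\in\{1/\lambda_s,\,(e^{\lambda_u\hat\Delta}-1)/\lambda_u\}$. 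Everything now hinges on showing that $\ln\Phi(0,\tau_k)=(\ln\mu)\,\Nsw-\lambda_s\TswS+\lambda_u\TswU$ decreases at a strictly negative linear rate in $\tau_k$.

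This is the crux and the step I expect to be hardest. Here the defining properties of $\Sw'_{\mathcal{R}}$ enter. By (C1) no two consecutive intervals are unstable, so in any block of $n$ consecutive intervals at most $\lceil n/2\rceil$ are unstable; together with the common minimum dwell $\delta$ this bounds the switch count by $\Nsw\le\tau_k/\delta$ and the unstable duration by $\TswU\le\tfrac{\hat\Delta}{2\delta}\tau_k+\mathrm{const}$, while (C2) pins a matching lower bound on the stable duration $\TswS$ through the stable minimum dwell $\check\delta$ and maximum dwell $\Delta$. Substituting these counts collapses $\ln\Phi(0,\tau_k)$ to $\tau_k\bigl(-\lambda_s\tfrac{\check\delta}{\Delta}+\lambda_s\tfrac{\check\delta}{2\delta}+\lambda_u\tfrac{\hat\Delta}{2\delta}+\tfrac{\ln\mu}{\delta}\bigr)+\mathrm{const}$, whereupon Assumption \ref{a:adm-dw} renders the parenthesised rate a negative constant $-2\varepsilon<0$; Assumption \ref{a:adm-sw} guarantees that such admissible signals exist at all. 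Reproducing the counting so that exactly the coefficients of \eqref{e:maincondn} appear, and then upgrading the estimate from the switch times $\tau_k$ to an arbitrary $t\in[\tau_k,\tau_{k+1})$ by folding the trailing partial interval into the constant, is the delicate bookkeeping.

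Finally I would read off \eqref{e:ioss-cont}. The homogeneous part obeys $\Phi(0,t)W(0)\le M e^{-\varepsilon t}\overline\alpha(\norm{x_0})$ via \eqref{e:key-prop1}, which is a $\KL$ function of $(\norm{x_0},t)$. The same rate applied to the sub-windows gives $\Phi(\tau_{j+1},t)\le M' e^{-\varepsilon(t-\tau_{j+1})}$, and since the $\tau_{j+1}$ are at least $\delta$ apart the forcing sum is dominated by a convergent geometric series, whence $\bigl(\sum_j\Phi(\tau_{j+1},t)c_j\bigr)\norm{g}_{[0,t]}\le c_\star\bigl(\gamma_1(\norm{v}_{[0,t]})+\gamma_2(\norm{y}_{[0,t]})\bigr)$ for a constant $c_\star$. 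Using $\underline\alpha(\norm{x(t)})\le W(t)$ from \eqref{e:key-prop1} and collecting terms, the choice $\alpha\Let\underline\alpha$, $\beta(r,t)\Let Me^{-\varepsilon t}\overline\alpha(r)$, $\chi_1\Let c_\star\gamma_1$ and $\chi_2\Let c_\star\gamma_2$ delivers \eqref{e:ioss-cont} for every $\sigma\in\Sw'_{\mathcal{R}}$, completing the argument.
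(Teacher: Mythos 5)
Your proposal is correct and follows essentially the same route as the paper: propagate the multiple Lyapunov-like functions across switching intervals, isolate a homogeneous factor $\exp\bigl(-\lambda_s\TswS+\lambda_u\TswU+(\ln\mu)\Nsw\bigr)$, use condition \ref{prop3} together with the dwell-time bounds of \ref{prop1}--\ref{prop2} to show its exponent is at most $c_1-c_2t$ with $c_2>0$ by Assumption \ref{a:adm-dw}, and bound the forcing sum by a convergent geometric series using the minimum dwell time $\delta$. The per-interval comparison estimates, the counting that produces exactly the coefficients of \eqref{e:maincondn}, and the final choice of $\alpha$, $\beta$, $\chi_1$, $\chi_2$ all match the paper's argument.
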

    
    Our class of stabilizing switching signals, \(\tilde{\Sw}_{\mathcal{R}} = {\Sw}'_{\mathcal{R}}\), is characterized as follows: if (a) the subsystems admit linearly comparable Lyapunov-like functions (Assumptions \ref{assump:key1}-\ref{assump:key2}), (b) the set of admissible switches allows a switch to a stable subsystem from every unstable subsystem (Assumption \ref{a:adm-sw}), and (c) the rate of decay (resp., growth) of Lyapunov-like functions corresponding to stable (resp., unstable) subsystems, the linear comparison factor between these functions and the admissible dwell times on the subsystems together satisfy a certain condition (Assumption \ref{a:adm-dw}), then every admissible switching signal that does not activate unstable subsystems on any two consecutive switching instants (condition \ref{prop3}), and dwells on the subsystems for certain admissible durations of time (conditions \ref{prop1}-\ref{prop2}) ensures IOSS of the continuous-time switched nonlinear system \eqref{e:swsys}.  We will demonstrate in our proof of Theorem \ref{t:mainres} that the choice of the functions \(\alpha\), \(\beta\), \(\chi_1\) and \(\chi_2\) (in Definition \ref{d:ioss-cont}) are independent of \(\sigma\in\Sw'_{\mathcal{R}}\). Consequently, IOSS of the switched system \eqref{e:swsys} is \emph{uniform} over the elements of \(\Sw'_{\mathcal{R}}\) in the above sense. In the absence of outputs (resp., also inputs), the switched system \eqref{e:swsys} is ISS (resp, GAS) under the set of switching signals, \(\Sw'_{\mathcal{R}}\). %A proof of Theorem \ref{t:mainres} appears in Section \ref{s:all_proofs}. %We now note various features of Theorem \ref{t:mainres}.
%    
    %\begin{remark}
    %\label{rem:uniformity}
   % \rm{
    %}
   % \end{remark}
    
    \begin{remark}
    \label{rem:restrictive}
    \rm{
        A restriction on activation of unstable dynamics on consecutive switching instants is theoretically restrictive. Indeed, it may be possible to switch carefully between all unstable subsystems resulting in a stable switched system. {\color{black}Global (uniform) asymptotic stability of switched nonlinear systems with all unstable subsystems is studied using discretized Lyapunov function technique in \cite{Xiang_2014} and using contraction analysis in \cite{Yin_2023}. In this note we study input/output-to-state stability of switched nonlinear systems using multiple Lyapunov-like functions and presence of at least one stable subsystem is a requirement for our analysis.}%Indeed, it may be possible to switch carefully between all unstable subsystems resulting in a stable switched system. However, in many practical scenarios, it is only natural for a switching mechanism to take a system from a faulty component to a healthy component. The elements of \(\Sw'_{\mathcal{R}}\) cater to this setting. 
    }
    \end{remark}
    
   % \begin{remark}
   % \label{rem:sufficient}
   % \rm{
    %}
    %\end{remark}
    
     %Given the admissible minimum and maximum dwell times, \(\delta\) and \(\Delta\), on the subsystems \(p\in\P\), respectively, satisfaction of Assumption \ref{a:adm-dw} depends on the choice of the Lyapunov-like functions, \(V_p\), \(p\in\P\) and their corresponding scalars, \(\lambda_s\), \(\lambda_u\) and \(\mu\). 
     We observe the following relations among \(\lambda_s\), \(\lambda_u\), \(\mu\), \(\delta\) and \(\Delta\) that ensure the existence of \(\check\delta\) and \(\hat\Delta\) such that condition \eqref{e:maincondn} is satisfied:
     
     \begin{proposition}
    \label{prop:nextres1}
    {\it{
        If there exist \(\check\delta,\hat\Delta\in[\delta,\Delta]\) such that condition \eqref{e:maincondn} holds, then \(\Delta<2\delta\).
    }
    }
    \end{proposition}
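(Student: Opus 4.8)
The plan is to isolate the unique negative contribution on the left-hand side of \eqref{e:maincondn} and to show that negativity of the whole expression can come only from that term. First I would group the two terms carrying the factor \(\check\delta\) and rewrite \eqref{e:maincondn} in the equivalent form
\begin{align*}
    \lambda_s\check\delta\left(\frac{1}{2\delta}-\frac{1}{\Delta}\right) + \lambda_u\frac{\hat\Delta}{2\delta} + \frac{\ln\mu}{\delta} < 0.
\end{align*}
The point is that every term except the first is manifestly nonnegative, and in fact one of them is strictly positive.

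Next I would record the signs of the individual pieces using the standing hypotheses. Since \(\mu\geq 1\) (Assumption \ref{assump:key2}), we have \(\ln\mu\geq 0\), and as \(\delta>0\) the last term satisfies \(\frac{\ln\mu}{\delta}\geq 0\). Since \(\lambda_u>0\) (Assumption \ref{assump:key1}) and \(\hat\Delta\in[\delta,\Delta]\) gives \(\hat\Delta\geq\delta>0\), the middle term \(\lambda_u\frac{\hat\Delta}{2\delta}\) is strictly positive. Consequently the sum of the second and third terms is strictly positive, so the displayed inequality forces the first term to be strictly negative, i.e.
\begin{align*}
    \lambda_s\check\delta\left(\frac{1}{2\delta}-\frac{1}{\Delta}\right) < 0.
\end{align*}
Because \(\lambda_s>0\) and \(\check\delta\geq\delta>0\), the coefficient \(\lambda_s\check\delta\) is strictly positive, whence \(\frac{1}{2\delta}-\frac{1}{\Delta}<0\). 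Inverting this inequality between positive quantities yields \(\Delta<2\delta\), as claimed.

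There is essentially no conceptual obstacle here; the argument is a short sign-bookkeeping exercise. The only points that require care are the \emph{strictness} of the conclusion and the use of all the structural assumptions: specifically, that \(\mu\geq 1\) is exactly what guarantees \(\ln\mu\geq 0\) rather than a possibly negative logarithmic term, and that \(\lambda_u>0\) together with \(\hat\Delta\geq\delta>0\) is what makes the unstable-mode term strictly positive and thereby pins the sign of the \(\check\delta\)-term. Were \(\ln\mu\) allowed to be negative, the conclusion \(\Delta<2\delta\) could fail, so I would be careful to invoke \(\mu\geq 1\) explicitly.
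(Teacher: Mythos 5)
Your proof is correct and is essentially the same sign-bookkeeping argument as the paper's, which simply phrases it contrapositively: assuming \(\Delta\geq 2\delta\), the term \(-\lambda_s\check\delta\bigl(\frac{1}{\Delta}-\frac{1}{2\delta}\bigr)\) is nonnegative, so the whole left-hand side of \eqref{e:maincondn} is nonnegative and the condition fails. Your direct version, isolating the unique possibly-negative term, is equivalent.
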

   
    %\begin{remark}
    %\label{rem:not_sufficient}  
    %\rm{
         Proposition \ref{prop:nextres1} gives a necessary condition for the satisfaction of \eqref{e:maincondn}. It is, however, easy to see that \(\Delta < 2\delta\) is not a sufficient condition for the satisfaction of \eqref{e:maincondn}. It depends further on the scalars \(\lambda_s\), \(\lambda_u\) and \(\mu\). Indeed, let \(\lambda_s=1.75\), \(\lambda_u=2.17\), \(\mu = 1.25\), \(\delta = 1.5\) and \(\Delta = 2.5 (<2\delta)\). It can be verified that there exists no choice of \(\check\delta,\hat\Delta\in[\delta,\Delta]\) such that condition \eqref{e:maincondn} holds.
    %     }
    %\end{remark}
    We now present a set of sufficient conditions for the satisfaction of Assumption \ref{a:adm-dw}.
    \begin{proposition}
    \label{prop:mainres2}
    {\it{
        (i) Suppose that \(V_p = V\) for all \(p\in\P\) and
       % \begin{align}
       % \label{e:main2}
            \(\displaystyle{\frac{\Delta^2}{2\delta^2} < \frac{\lambda_s}{\lambda_s+\lambda_u}}\).
       % \end{align}
        Then condition \eqref{e:maincondn} holds for all \(\check\delta\) and \(\hat\Delta\in[\delta,\Delta]\).\\
        (ii) Suppose that
       % \begin{align}
       % \label{e:main3}
            \(\displaystyle{\lambda_u\frac{\Delta}{2\delta}+(\ln\mu)\frac{1}{\delta}<\lambda_s\biggl(\frac{\delta}{\Delta}-\frac{\Delta}{2\delta}\biggr)}\).
       % \end{align}
        Then condition \eqref{e:maincondn} holds for all \(\check\delta\) and \(\hat\Delta\in[\delta,\Delta]\).\\
        (iii) Suppose that
        %\begin{align}
        %\label{e:main4}
            \(\displaystyle{\frac{\lambda_u}{2}+(\ln\mu)\frac{1}{\delta} < \lambda_s\biggl(1-\frac{\Delta}{2\delta}\biggr)}\).
        %\end{align}
        Then \eqref{e:maincondn} holds with \(\check\delta = \Delta\) and \(\hat\Delta=\delta\).\\
        (iv) Suppose that
        %\begin{align}
        %\label{e:main5}
            \(\displaystyle{\lambda_u\frac{\Delta}{2\delta}+(\ln\mu)\frac{1}{\delta} < \lambda_s\biggl(\frac{\delta}{\Delta}-\frac{1}{2}\biggr)}\).
        %\end{align}
        Then \eqref{e:maincondn} holds with \(\check\delta = \delta\) and \(\hat\Delta=\Delta\).
    }}
    \end{proposition}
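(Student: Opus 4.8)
The plan is to treat the left-hand side of \eqref{e:maincondn} as a function
\[
\Phi(\check\delta,\hat\Delta) \coloneqq -\lambda_s\frac{\check\delta}{\Delta} + \lambda_s\frac{\check\delta}{2\delta} + \lambda_u\frac{\hat\Delta}{2\delta} + (\ln\mu)\frac{1}{\delta}
\]
of the two free dwell-time parameters \(\check\delta,\hat\Delta\in[\delta,\Delta]\), and to observe that \(\Phi\) is affine in each argument. The coefficient of \(\hat\Delta\), namely \(\lambda_u/(2\delta)\), is positive, while the coefficient of \(\check\delta\), namely \(\lambda_s(\tfrac{1}{2\delta}-\tfrac{1}{\Delta})\), is negative precisely when \(\Delta<2\delta\) (the regime forced by Proposition \ref{prop:nextres1}). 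Hence over the box \([\delta,\Delta]^2\) the map \(\Phi\) attains its maximum at \((\check\delta,\hat\Delta)=(\delta,\Delta)\) and its minimum at \((\check\delta,\hat\Delta)=(\Delta,\delta)\). This monotonicity dictates which corner to use for each claim: (i)--(ii) are \emph{for all} \(\check\delta,\hat\Delta\) statements and so require controlling the maximum, whereas (iii)--(iv) only assert \eqref{e:maincondn} at a single prescribed corner.

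For (ii) I would bound \(\Phi\) term by term over \([\delta,\Delta]^2\): replace \(\check\delta\) by its smallest value \(\delta\) in the negative term \(-\lambda_s\check\delta/\Delta\), by its largest value \(\Delta\) in the positive term \(\lambda_s\check\delta/(2\delta)\), and \(\hat\Delta\) by \(\Delta\) in \(\lambda_u\hat\Delta/(2\delta)\), leaving the constant \((\ln\mu)/\delta\) untouched. This yields
\[
\Phi(\check\delta,\hat\Delta)\le -\lambda_s\frac{\delta}{\Delta} + \lambda_s\frac{\Delta}{2\delta} + \lambda_u\frac{\Delta}{2\delta} + (\ln\mu)\frac{1}{\delta},
\]
and requiring the right-hand side to be strictly negative, after moving \(\lambda_s\Delta/(2\delta)\) across, is exactly the hypothesis of (ii); hence \(\Phi<0\) on the whole box. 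Because the bound is established term by term, I would not even need the monotonicity observation here — it merely explains why this is the right bound. Claim (i) is then the specialization \(\mu=1\) (so \(\ln\mu=0\)), for which the displayed inequality rearranges to \((\lambda_s+\lambda_u)\tfrac{\Delta^2}{2\delta^2}<\lambda_s\), i.e. \(\tfrac{\Delta^2}{2\delta^2}<\tfrac{\lambda_s}{\lambda_s+\lambda_u}\), the stated condition.

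For (iii) and (iv) the argument is pure substitution. Evaluating \(\Phi(\Delta,\delta)=-\lambda_s+\lambda_s\tfrac{\Delta}{2\delta}+\tfrac{\lambda_u}{2}+(\ln\mu)\tfrac1\delta\) and demanding it be negative gives, after rearrangement, the condition \(\tfrac{\lambda_u}{2}+(\ln\mu)\tfrac1\delta<\lambda_s(1-\tfrac{\Delta}{2\delta})\) of (iii); evaluating \(\Phi(\delta,\Delta)=-\lambda_s\tfrac{\delta}{\Delta}+\tfrac{\lambda_s}{2}+\lambda_u\tfrac{\Delta}{2\delta}+(\ln\mu)\tfrac1\delta\) and demanding negativity gives \(\lambda_u\tfrac{\Delta}{2\delta}+(\ln\mu)\tfrac1\delta<\lambda_s(\tfrac{\delta}{\Delta}-\tfrac12)\) of (iv). Both rely only on \(\Delta,\delta\in[\delta,\Delta]\), so the prescribed choices are admissible. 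There is no genuine obstacle in this proposition: every part is an elementary rearrangement of an affine inequality, and the only things to watch are the directions of the inequalities and keeping the three occurrences of \(\check\delta\)/\(\hat\Delta\) bounded consistently. The one conceptual point worth stating explicitly is the sign analysis identifying \((\delta,\Delta)\) as the maximizing corner, since that is what certifies that (ii) — and hence (i) — guarantees \eqref{e:maincondn} uniformly in \(\check\delta,\hat\Delta\) rather than only at a single point.
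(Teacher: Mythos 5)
Your proposal is correct and takes essentially the same route as the paper: the paper likewise bounds the left-hand side of \eqref{e:maincondn} term by term (replacing \(\check\delta\) by \(\delta\) in the negative term and by \(\Delta\) in the positive one, and \(\hat\Delta\) by \(\Delta\)) for parts (i)--(ii), and substitutes the prescribed corners directly for parts (iii)--(iv). Your additional observation that \(\Phi\) is affine with maximizing corner \((\delta,\Delta)\) when \(\Delta<2\delta\) is correct but, as you note yourself, not needed once the term-by-term bound is in place.
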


    \begin{remark}
    \label{rem:compa}
    \rm{
        %Stability of switched systems is studied broadly in three directions --- (a) stability analysis under \emph{all} switching signals, (b) identification of \emph{classes of} stabilizing switching signals, and (c) construction of \emph{a} stabilizing switching signal \cite{Liberzon_1999}. 
        Earlier in \cite{abc} we discussed algorithmic design of \emph{a} stabilizing periodic switching signal that obeys pre-specified restrictions on admissible switches between the subsystems and admissible dwell times on the subsystems. In the current note we identify \emph{a class of} stabilizing switching signals that obeys the given restrictions. %More specifically, the work \cite{abc} deals with the construction of stabilizing periodic switching signals based on the number of cycles on the underlying weighted directed graph of a switched system that satisfy certain conditions, while the current work is focussed on certifying stability of a switched system under classes of switching signals obeying a “given” set of restrictions. 
       % Theorem \ref{t:mainres} and Propositions \ref{prop:nextres1}-\ref{prop:mainres2} were first presented in the conference paper \cite{def}. However, their proofs were omitted. State-norm estimation of the switched system \eqref{e:swsys} under the elements of \(\Sw'_{\mathcal{R}}\) was not discussed earlier. %We identify IOSS of switched systems under all switching signals obeying a given set of restrictions as a direction for future work. 
       Our stability conditions are only sufficient in the sense that if there is no \(\check\delta\), \(\hat\Delta\in[\delta,\Delta]\) such that condition \eqref{e:maincondn} holds and/or the admissible switches between the subsystems violate Assumption \ref{a:adm-sw}, then we cannot conclude non-existence of a non-empty set \(\tilde{\Sw}_{\mathcal{R}}\subseteq\Sw_{\mathcal{R}}\) whose elements are stabilizing.
    }
    \end{remark}
    
 %   We now move on to our solution to Problem \ref{prob:main2}.
\subsection{State-norm estimation of switched systems}
\label{ss:estimation}

     Consider a switched system
    \begin{align}
    \label{e:state_norm2}
        \dot{{z}}(t) = g_{{\zeta}(t)}\bigl({z}(t),v(t),y(t)\bigr),\:\:{z}(0)={z}_0\geq 0,\:\:t\geq 0
    \end{align}
    consisting of the following components:
    %\begin{enumerate}[label=\alph*), leftmargin=*]
        %\item 
        a) The subsystems
        \begin{align*}
            \dot{{z}}(t) &= g_0\bigl({z}(t),v(t),y(t)\bigr) = -\lambda_s^*{z}(t)+\overline{\gamma}(t),\\
            \dot{{z}}(t) &= g_1\bigl({z}(t),v(t),y(t)\bigr) = \lambda_u^*{z}(t)+\overline{\gamma}(t),
        \end{align*}
        where \(\overline{\gamma}(t):= \gamma_1\bigl(\norm{v(t)}\bigr)+\gamma_2\bigl(\norm{y(t)}\bigr)\) and \(\lambda_s^*, \lambda_u^*\) satisfy the following conditions:
         \begin{align}
        \label{e:key_condn1a}&\lambda_s^*\in]0,\lambda_s[,\:\:\lambda_u^*\geq\lambda_u,\:\:\lambda_s - \lambda_s^*+\lambda_u - \lambda_u^* \geq 0,\\
        \label{e:key_condn2}&-\lambda_s^*\frac{\check\delta}{\Delta}+\lambda_s^*\frac{\check\delta}{2\delta}+\lambda_u^*\frac{\hat\Delta}{2\delta}
        <0,\\
        \label{e:key_condn3}&(\ln\mu)\frac{1}{\delta}-(\lambda_s-\lambda_s^*)+(\lambda_s - \lambda_s^*+\lambda_u - \lambda_u^*)\frac{\hat\Delta}{2\delta} < 0.%,\\
       % \label{e:key_condn3a}&-\lambda_s^*\frac{\check\delta}{\Delta}+\lambda_s^*\frac{\check\delta}{2\delta}
%        +\lambda_u^*\frac{\hat\Delta}{2\delta}<0,\\
%        \label{e:key_condn4}&\lambda_u^*\hat\Delta\geq-\lambda_s^*\Bigl(\frac{\hat\Delta\check\delta}{\Delta}
%        -\frac{\hat\Delta\check\delta}{2\delta}+\frac{\check\delta}{2}\Bigr)
%        +\lambda_u^*\Bigl(\frac{\hat\Delta^2}{2\delta}+\hat\Delta\Bigr)
%        +(\lambda_s^*+\lambda_u^*)\Bigl(\frac{\check\delta\hat\Delta}{2\delta}+\hat\Delta\Bigr).
    \end{align}
        %\item 
        b) The switching signal \({\zeta}:[0,+\infty[\to\{0,1\}\) satisfying
        \begin{align*}
            \hspace*{-0.5cm}{\zeta}(t) = 
            \begin{cases}
                0,\:\:\text{if}\:t\in]k(\tilde\delta+\tilde\Delta),k(\tilde\delta+\tilde\Delta)+\tilde\delta],\\
                1,\:\:\text{if}\:t\in]k(\tilde\delta+\tilde\Delta)+\tilde\delta,(k+1)(\tilde\delta+\tilde\Delta)],
            \end{cases}
            %\hspace*{-0.4cm}k=0,1,2,\ldots,
        \end{align*}
       \(k=0,1,2,\ldots\), where \(\tilde\delta, \tilde\Delta > 0\) satisfy the following conditions:
    \begin{align}
        \label{e:newcondn1}&\tilde\delta \leq \check\delta,\:\:\tilde\Delta\geq\hat\Delta,\\%\:\:\tilde\delta+\tilde\Delta\geq\check\delta+\hat\Delta,\\
        \label{e:newcondn4}&-\lambda_s^*+(\lambda_s^*+\lambda_u^*)\frac{\tilde\Delta}{\tilde\delta+\tilde\Delta} < 0,\\
        \label{e:newcondn3}&\frac{\tilde\Delta\hat\Delta}{2\delta}+\frac{\tilde\delta\hat\Delta}{2\delta}-\tilde\Delta
        \leq 0.
    \end{align}
    %\end{enumerate}
    
    The subsystem \(0\), represented by dynamics \(g_0\), is ISS with respect to the inputs \((v,y)\) and is activated by \(\zeta\) for \(\tilde\delta\) duration of time during the interval \(]k(\tilde\delta+\tilde\Delta),k(\tilde\delta+\tilde\Delta)+\tilde\delta]\) while the subsystem \(1\), represented by dynamics \(g_1\), is unstable and is activated by \(\zeta\) for \(\tilde\Delta\) duration of time during the interval \(]k(\tilde\delta+\tilde\Delta)+\tilde\delta,(k+1)(\tilde\delta+\tilde\Delta)]\), \(k=0,1,\ldots\). Notice that the switching between the subsystems \(g_0\) and \(g_1\) by \(\zeta\) is independent of the exact switching instants and switching destinations of the switching signal \(\sigma\). The following result asserts that the switched system \eqref{e:state_norm2} is a state-norm estimator for the switched system \eqref{e:swsys}. %Our design of state-norm estimators is a counterpart of the state-norm estimator design presented in \cite[Theorem 5]{Liberzon_2012} for IOSS preserving switching signals that obey pre-specified restrictions on admissible switches between the subsystems and admissible dwell times on the subsystems.   
    
      \begin{theorem}
    \label{t:mainres3}
    {\it{
       % Consider a family of systems \eqref{e:family}. Let the set of admissible switches between the subsystems, \(E(\P)\), and the admissible minimum and maximum dwell times, \(\delta\) and \(\Delta\), on the subsystems \(p\in\P\), respectively, be given. 
       Suppose that Assumptions \ref{assump:key1}-\ref{a:adm-dw} hold. Let the switching signals of \eqref{e:swsys} belong to the set \(\Sw'_{\mathcal{R}}\). Then the switched system \eqref{e:state_norm2} is a state norm estimator for the switched system \eqref{e:swsys}.
        }}
    \end{theorem}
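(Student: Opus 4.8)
The plan is to verify the two defining properties of a state-norm estimator from Definition \ref{d:norm_est} for the switched system \eqref{e:state_norm2}; note first that since \(z_0\geq 0\) and \(\overline{\gamma}\geq 0\), the scalar state satisfies \(z(t)\geq 0\), so \(\norm{z(t)}=z(t)\) throughout. For property (a), ISS of \eqref{e:state_norm2} with respect to \((v,y)\), I would use the variation-of-constants formula for the scalar periodic switched affine equation: on a \(\zeta=0\) phase the homogeneous factor is \(e^{-\lambda_s^*\cdot(\text{length})}\) and on a \(\zeta=1\) phase it is \(e^{\lambda_u^*\cdot(\text{length})}\), so the homogeneous kernel over \([s,t]\) is \(\tilde\Psi(t,s)=\exp(-\lambda_s^*\tilde{\mathrm T}^{\mathrm S}(s,t)+\lambda_u^*\tilde{\mathrm T}^{\mathrm U}(s,t))\), where \(\tilde{\mathrm T}^{\mathrm S},\tilde{\mathrm T}^{\mathrm U}\) are the total \(\zeta=0\) and \(\zeta=1\) durations. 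Condition \eqref{e:newcondn4} is equivalent to \(-\lambda_s^*\tilde\delta+\lambda_u^*\tilde\Delta<0\), i.e. the per-period exponent is negative, whence \(\tilde\Psi(t,s)\leq Me^{-\nu(t-s)}\) for some \(M,\nu>0\). Then \(z(t)\leq Me^{-\nu t}z_0+(M/\nu)\norm{\overline{\gamma}}_{[0,t]}\), and since \(\overline{\gamma}=\gamma_1(\norm{v})+\gamma_2(\norm{y})\) with \(\gamma_1,\gamma_2\) nondecreasing, the forcing term splits into a \(\Kinfty\)-gain in \(\norm{v}_{[0,t]}\) and one in \(\norm{y}_{[0,t]}\), giving ISS.

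For property (b) I would work with \(W(t)\Let V_{\sigma(t)}(x(t))\). Assumption \ref{assump:key1} yields \(\dot W\leq-\lambda_s W+\overline{\gamma}\) on IOSS subsystems and \(\dot W\leq\lambda_u W+\overline{\gamma}\) on unstable ones, while Assumption \ref{assump:key2} gives the jump relation \(W(\tau_i^+)\leq\mu W(\tau_i^-)\); crucially the \emph{same} \(\overline{\gamma}\) drives both \(W\) and \(z\). Variation of constants then gives \(W(t)\leq\Psi(t,0)W(0)+\int_0^t\Psi(t,s)\overline{\gamma}(s)\,ds\) with \(\Psi(t,s)=\mu^{\Nsw(s,t)}\exp(-\lambda_s\TswS(s,t)+\lambda_u\TswU(s,t))\), where \(\TswS,\TswU,\Nsw\) denote the stable duration, unstable duration, and number of switches of \(\sigma\) in \((s,t]\). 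The crux is a uniform comparison \(\Psi(t,s)\leq C\,\tilde\Psi(t,s)\) for all \(0\leq s\leq t\) and all \(\sigma\in\Sw'_{\mathcal{R}}\), with \(C\) independent of \(\sigma\). Granting this, the shared-forcing structure gives \(W(t)\leq C z(t)+C\tilde\Psi(t,0)W(0)\); bounding \(\tilde\Psi(t,0)\leq Me^{-\nu t}\) and \(W(0)\leq\overline{\alpha}(\norm{x_0})\), then applying \(\underline{\alpha}^{-1}\) with the weak triangle inequality \(\underline{\alpha}^{-1}(a+b)\leq\underline{\alpha}^{-1}(2a)+\underline{\alpha}^{-1}(2b)\), produces \(\norm{x(t)}\leq\overline{\beta}(\norm{x_0},t)+\overline{\chi}(z(t))\) with \(\overline{\chi}(r)=\underline{\alpha}^{-1}(2Cr)\in\Kinfty\) and \(\overline{\beta}(r,t)=\underline{\alpha}^{-1}(2CMe^{-\nu t}\overline{\alpha}(r))\in\KL\); enlarging the argument of \(\overline{\beta}\) to \(\norm{x_0}+\norm{z_0}\) is harmless and yields \eqref{e:norm_condn}.

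The main obstacle is exactly the comparison \(\Psi\leq C\tilde\Psi\), because \(W\) carries the multiplicative jumps \(\mu\) that \(z\) lacks, and because the periodic schedule of \(\zeta\) is not synchronized with the arbitrary admissible \(\sigma\). Taking logarithms, the target is \(\Nsw\ln\mu-\lambda_s\TswS+\lambda_u\TswU\leq\ln C-\lambda_s^*\tilde{\mathrm T}^{\mathrm S}+\lambda_u^*\tilde{\mathrm T}^{\mathrm U}\). My plan is to split the left exponent as \((-\lambda_s^*\TswS+\lambda_u^*\TswU)+(\Nsw\ln\mu-(\lambda_s-\lambda_s^*)\TswS-(\lambda_u^*-\lambda_u)\TswU)\). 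For the second group, writing \(\TswS=T-\TswU\) with \(T=t-s\) and using the counting bounds for \(\sigma\in\Sw'_{\mathcal{R}}\) on the window — namely \(\Nsw\leq T/\delta+1\), and, since by \ref{prop3} no two unstable modes are consecutive and each mode lasts at least \(\delta\), \(\TswU\leq(\hat\Delta/2\delta)T+\text{const}\) — together with \(\lambda_s-\lambda_s^*+\lambda_u-\lambda_u^*\geq0\) from \eqref{e:key_condn1a} to sign the resulting \(\TswU\)-coefficient, this group is bounded above by \([(\ln\mu)/\delta-(\lambda_s-\lambda_s^*)+(\lambda_s-\lambda_s^*+\lambda_u-\lambda_u^*)\hat\Delta/(2\delta)]T+\text{const}\), whose bracket is precisely the negative quantity in \eqref{e:key_condn3}; this is where the excess stable decay \(\lambda_s-\lambda_s^*\) absorbs the accumulated jumps. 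For the first group, since both schedules cover the same window, \((-\lambda_s^*\TswS+\lambda_u^*\TswU)-(-\lambda_s^*\tilde{\mathrm T}^{\mathrm S}+\lambda_u^*\tilde{\mathrm T}^{\mathrm U})=(\lambda_s^*+\lambda_u^*)(\TswU-\tilde{\mathrm T}^{\mathrm U})\), and condition \eqref{e:newcondn3}, equivalently \(\hat\Delta/(2\delta)\leq\tilde\Delta/(\tilde\delta+\tilde\Delta)\), ensures the estimator's unstable duty ratio dominates the worst-case unstable fraction of \(\sigma\), so that \(\TswU-\tilde{\mathrm T}^{\mathrm U}\leq\text{const}\). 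The dwell relations \eqref{e:newcondn1} and condition \eqref{e:key_condn2} play auxiliary roles: \eqref{e:newcondn1} guarantees that the phase lengths of \(\zeta\) bracket the admissible dwell intervals of \(\sigma\) (consistent with \ref{prop1}--\ref{prop2}), while \eqref{e:key_condn2}, the starred analogue of \eqref{e:maincondn}, keeps the intermediate starred exponent \(-\lambda_s^*\TswS+\lambda_u^*\TswU\) nonpositive on a rate basis under the original dwell-time budget, so the comparison system stays decaying. Combining the two bounded groups yields \(\Psi\leq C\tilde\Psi\). The delicate points will be the uniform control of the boundary and partial-period constants independently of \(\sigma\) and of the window, and the careful verification of the counting estimates for \(\TswU\) and \(\Nsw\) under conditions \ref{prop3}--\ref{prop2}.
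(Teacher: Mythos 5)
Your argument is sound and reaches the conclusion by a genuinely different route from the paper. The paper proceeds in two stages: Lemma \ref{lem:auxres3} first constructs an auxiliary estimator \(w\) whose switching signal is \emph{synchronized} with \(\sigma\) (\(\upsilon(t)=0\) iff \(\sigma(t)\in\P_S\)) and proves it is a state-norm estimator by iterating the difference function \(W(t)=V_{\sigma(t)}(x(t))-w(t)\) across the switching instants of \(\sigma\) --- this is where \eqref{e:key_condn1a} and \eqref{e:key_condn3} absorb the multiplicative jumps \(\mu\); the proof of the theorem then establishes the pointwise domination \(w(t)\le c\,z(t)\) by induction over the periods of \(\zeta\), with an interval-by-interval case analysis of where the dynamics of \(w\) and \(z\) agree or disagree, which is where \eqref{e:newcondn1}--\eqref{e:newcondn3} enter. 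You bypass the intermediate system entirely and prove a single uniform kernel comparison \(\Psi(t,s)\le C\tilde\Psi(t,s)\) over all windows, then exploit the shared forcing \(\overline\gamma\) to dominate the convolution term by \(Cz(t)\). Your use of \eqref{e:key_condn3} (letting the excess decay \(\lambda_s-\lambda_s^*\) absorb \(\mu^{\Nsw}\), with the sign of the \(\TswU\)-coefficient fixed by \eqref{e:key_condn1a}) and of \eqref{e:newcondn3} (read, correctly, as \(\hat\Delta/(2\delta)\le\tilde\Delta/(\tilde\delta+\tilde\Delta)\), i.e.\ the estimator's unstable duty ratio dominates the worst-case unstable fraction of \(\sigma\)) matches exactly the roles those conditions play in the paper, and your counting estimates for \(\Nsw\) and \(\TswU\) are the content of Lemmas \ref{lem:auxres1}--\ref{lem:auxres2}. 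Your route is shorter and makes visible that \eqref{e:key_condn2} and \eqref{e:newcondn1} are not load-bearing for the final statement, whereas the paper's decomposition buys the synchronized (non-robust) estimator \eqref{e:state_norm1} as a standalone intermediate result and isolates the asynchrony between \(\zeta\) and \(\sigma\) into one clean pointwise comparison. The points to nail down in a full write-up are the ones you already flag: the kernel comparison must hold for every real \(s\in[0,t]\), not only at switching instants; the lower bound \(\tilde{\mathrm{T}}^{\mathrm{U}}(s,t)\ge\frac{\tilde\Delta}{\tilde\delta+\tilde\Delta}(t-s)-\mathrm{const}\) for windows unaligned with the period of \(\zeta\); and the fact that \(z_0\ge 0\) is what lets you discard \(\tilde\Psi(t,0)z_0\) when bounding the integral term by \(z(t)\).
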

    \begin{remark}
    \label{rem:compa2}
    \rm{
        {\color{black}Our design of state-norm estimator is a restricted switching counterpart of \cite[Theorem 5]{Liberzon_2012}. In both the cases the estimator is a switched system with two subsystems, one stable and one unstable, and the switching signal is independent of the exact knowledge of switching instants and destinations of the switching signals of the switched system \eqref{e:swsys}. The estimator in \cite[Theorem 5]{Liberzon_2012} caters to subsets \(\mathcal{S}'\) of \(\mathcal{S}\) whose elements satisfy a favourably chosen stabilizing (average) dwell time whereas the estimator \eqref{e:state_norm2} caters to switching signals that belong to a pre-specified subset \(\mathcal{S}_{\mathcal{R}}\) of \(\mathcal{S}\).} 
    }
    \end{remark}
   % A proof of Theorem \ref{t:mainres3} appears in Section \ref{s:all_proofs}. Clearly, the construction of the estimator \eqref{e:state_norm2} is more robust with respect to the construction of the estimator \eqref{e:state_norm1} as the exact knowledge or instantaneous detection of the switching instants and the switching destinations of \(\sigma\) is no longer required.

    %\begin{remark}
%    \label{rem:compa2}
%    \rm{
%        In \cite{Liberzon_2012} the authors study state-norm estimation of continuous-time switched nonlinear systems under IOSS preserving average dwell time switching signals. %Two classes of state-norm estimators both of which are switched systems themselves, are discussed. The first class of estimators rely on exact knowledge or instantaneous detection of switching instants and destinations of the switching signal of the switched system for which state estimation is required, and the second class of estimators is independent of the exact knowledge or instantaneous detection of switching instants and destinations of the switching signal of the switched system for which state estimation is required. %We follow \cite[Theorems 4 and 5]{Liberzon_2012} in our design of state-norm estimators for IOSS preserving switching signals that obey pre-specified restrictions on admissible switches between the subsystems and admissible dwell times on the subsystems. 
        
       % }   
%    \end{remark}
  
   % We now present a numerical example to demonstrate our results.
%=================================================
\section{A numerical example}
\label{s:num_ex}
    We consider a family of systems \eqref{e:family} with \(\P = \{1,2,3\}\), where
    %\begin{align*}
        \(f_1(x,v) = \pmat{-2x_1+\sin(x_1-x_2)\\-2x_2+\sin(x_2-x_1)+0.5v},\:\:h_1(x) = x_1-x_2\),
        \(f_2(x,v) = \pmat{0.5x_2+0.25\abs{x_1}\\\text{sat}(x_1)+\frac{1}{2}v},\:\:h_2(x) = \abs{x_1}\),
    %\end{align*}
    where \(\text{sat}(x_1)=\min\{1,\max\{-1,x_1\}\}\), 
    \(f_3(x,v) = \pmat{0.2x_1+0.1x_2\\0.3x_1+v}\), \(h_3(x) = x_1\). Clearly, \(\P_S=\{1\}\) and \(\P_U = \{2,3\}\). Let the admissible switches be \(E(\P) = \{(1,2),(1,3),(2,1),(3,1)\}\), the admissible minimum and maximum dwell times, respectively, be \(\delta = 3.5\) and \(\Delta = 4\) units of time.
%\subsection{Input/output-to-state stability}
%\label{ss:num_ex-part_1}    
   Let \(V_1(x)=V_2(x) = \frac{1}{2}(x_1^2+x_2^2)\) and \(V_3(x) = x_1^2+x_2^2\). The following estimates are obtained: \(\lambda_s = 3.5\), \(\lambda_u = 0.73\), \(\mu = 2\), \(\gamma_1(r)=\gamma_2(r) = 2r^2\). Let \(\check\delta = 3.5\), \(\hat\Delta = 4\) units of time. It follows that
    %\begin{align*}
    \(\displaystyle{
        -\lambda_s\frac{\check\delta}{\Delta}+\lambda_s\frac{\check\delta}{2\delta}+\lambda_u\frac{\hat\Delta}{2\delta}
        +(\ln\mu)\frac{1}{\delta} = -0.6973}.
    \)
    %\end{align*}
    Thus, Assumptions \ref{assump:key1}-\ref{a:adm-dw} hold. %Consequently, Theorem \ref{t:mainres} is applicable.
    
    We generate \(10\) elements of the set \(\mathcal{S}'_{\mathcal{R}}\) and corresponding to each element, we choose \(x_0\in[-1,1]^2\) and \({\color{blue}v(t)}\in[-0.5,0.5]\) uniformly at random. The resulting processes \(\bigl(\norm{x(t)}\bigr)_{t\geq 0}\) are illustrated till \(t=15\) units of time in Fig. \ref{fig:ioss}. The assertion of Theorem \ref{t:mainres} follows.
    %\begin{figure}[htbp]
%    \centering
%        \includegraphics[scale=0.5]{xt-plot-final}
%        \caption{\(\bigl(\norm{x(t)}\bigr)_{t\geq 0}\) under randomly chosen \(\sigma\in\mathcal{S}'_{\mathcal{R}}\) and \(x_0\in[-1,1]^2\).}\label{fig:ioss}
%    \end{figure}
%    \begin{figure}[htbp]
%    \centering
%        \includegraphics[scale=0.5]{est-plot1}
%        \caption{\(\bigl(\norm{x(t)}\bigr)_{t\geq 0}\) under randomly chosen \(\sigma\in\mathcal{S}'_{\mathcal{R}}\) and \(x_0\in[-1,1]^2\) and the corresponding \(\bigl(z(t)\bigr)_{t\geq 0}\) and \(\bigl(\overline{z}(t)\bigr)_{t\geq 0}\) with \(z(0) = 1\) and \(\overline{z}(0) = 2\).}\label{fig:estimate}
%    \end{figure}
   % \begin{figure}[!htb]
%    \centering
%    %\begin{minipage}{.25\textwidth}
%        \centering
%        \includegraphics[scale=0.3]{xplot}
%        \caption{Plot for \(\bigl(\norm{x(t)}\bigr)_{t\geq 0}\).}\label{fig:ioss}
%   % \end{minipage}%
%    %\begin{minipage}{0.25\textwidth}
%        \centering
%        \includegraphics[scale = 0.3]{zplot}
%        \caption{\(\bigl(\norm{x(t)}\bigr)_{t\geq 0}\) under randomly chosen \(\sigma\in\mathcal{S}'_{\mathcal{R}}\) and \(x_0\in[-1,1]^2\), \(v\in[-0.5,0.5]\) (in solid line) and the corresponding \(\bigl(z(t)\bigr)_{t\geq 0}\) with \(z(0) = 2\) (in dashed line). Change in color indicates switching.}\label{fig:estimate}
%    %\end{minipage}
%\end{figure}
\vspace*{-0.5cm}
    \begin{figure}[!htb]
    \centering
    \begin{minipage}{.5\textwidth}
        \centering
        \includegraphics[scale=0.3]{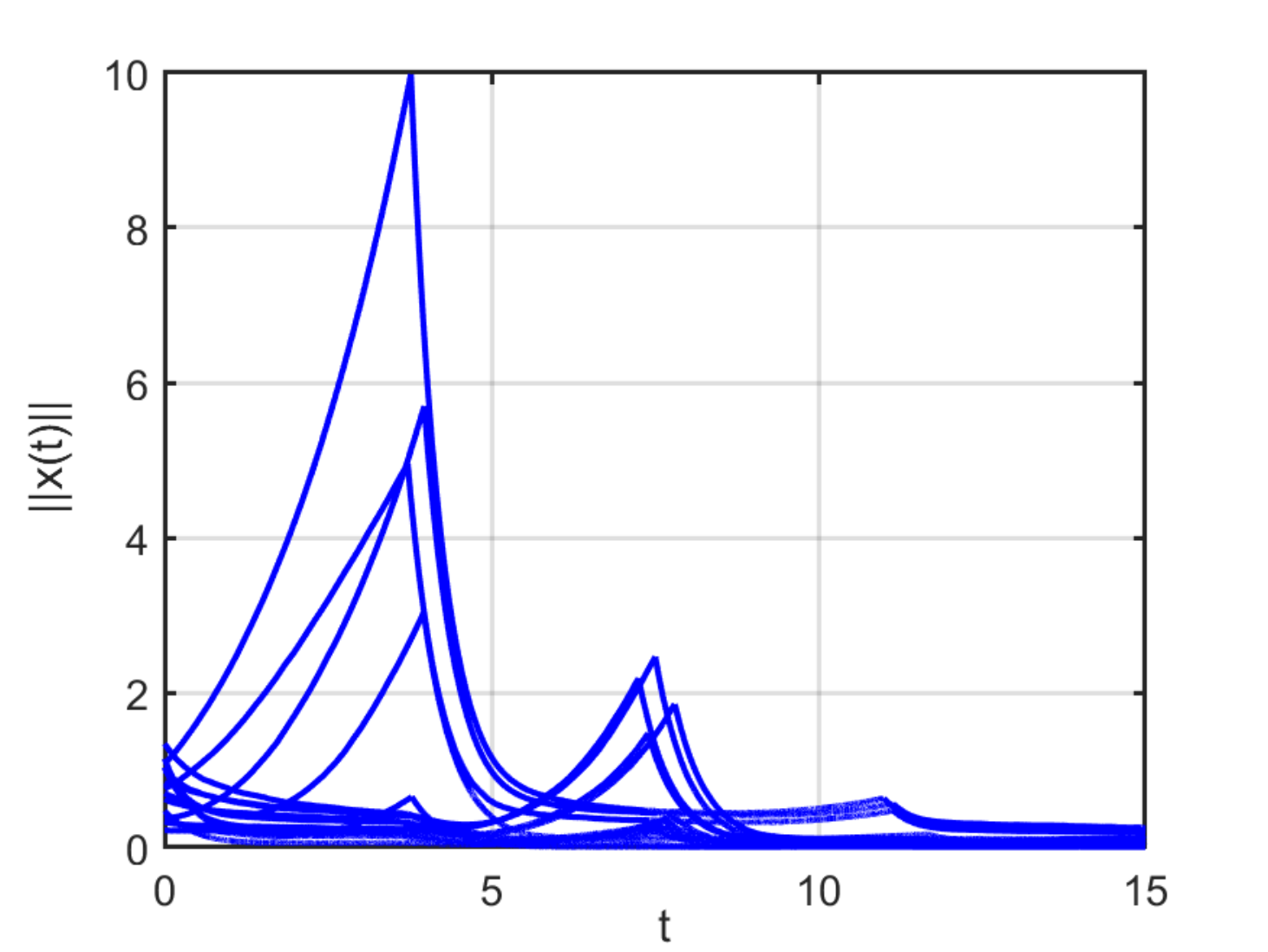}
         \caption{Plot for \(\bigl(\norm{x(t)}\bigr)_{t\geq 0}\).}\label{fig:ioss}
    \end{minipage}%
    \begin{minipage}{0.5\textwidth}
        \centering
        \vspace*{0.5cm}
        \includegraphics[scale=0.3]{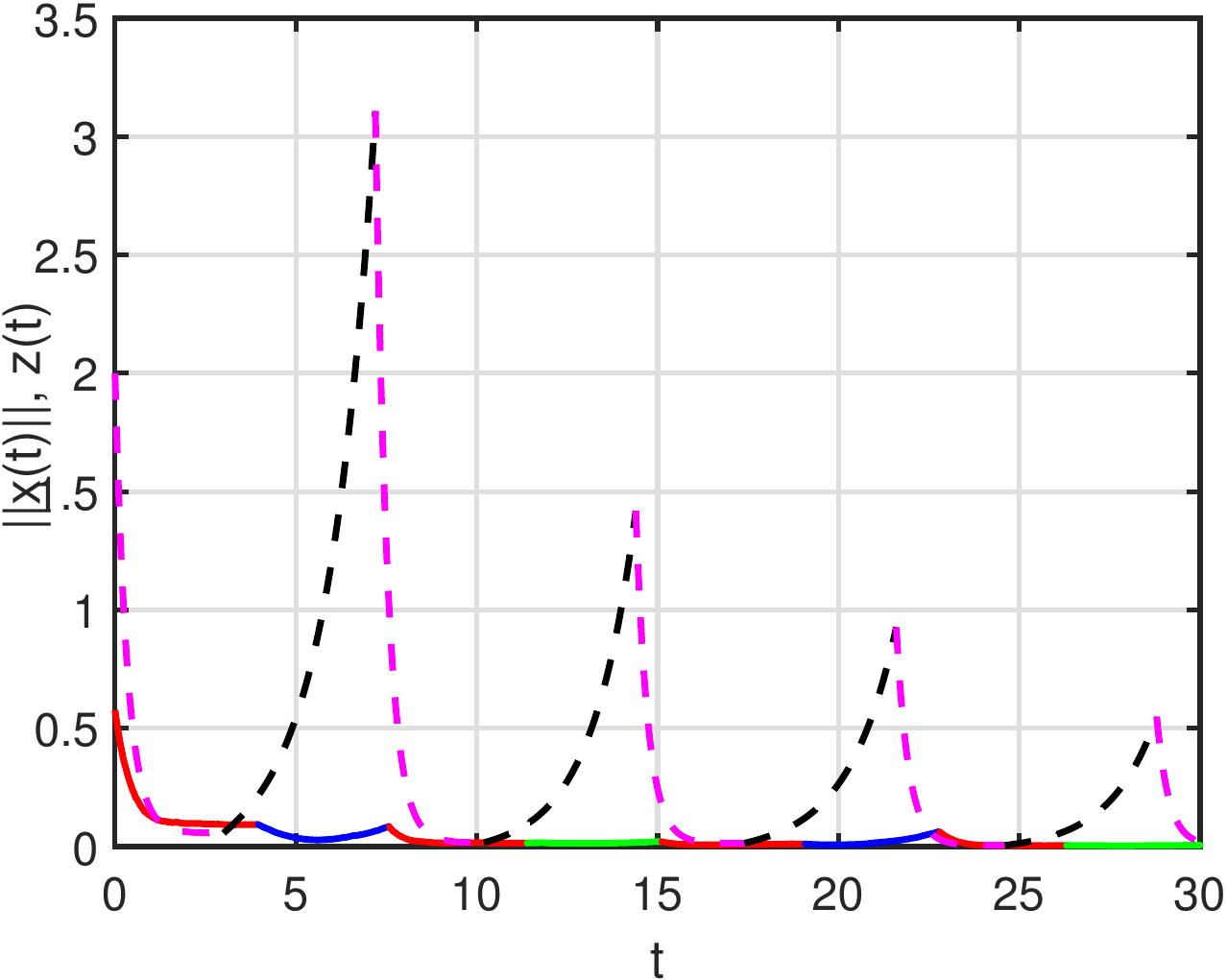}
        \caption{Plot for \(\bigl(\norm{x(t)}\bigr)_{t\geq 0}\) (solid line) and \(\bigl(z(t)\bigr)_{t\geq 0}\) (dashed line).}
        \label{fig:estimate}
    \end{minipage}
    \end{figure}
    \begin{figure}
    \begin{minipage}{.5\textwidth}
        \centering
        \includegraphics[scale=0.3]{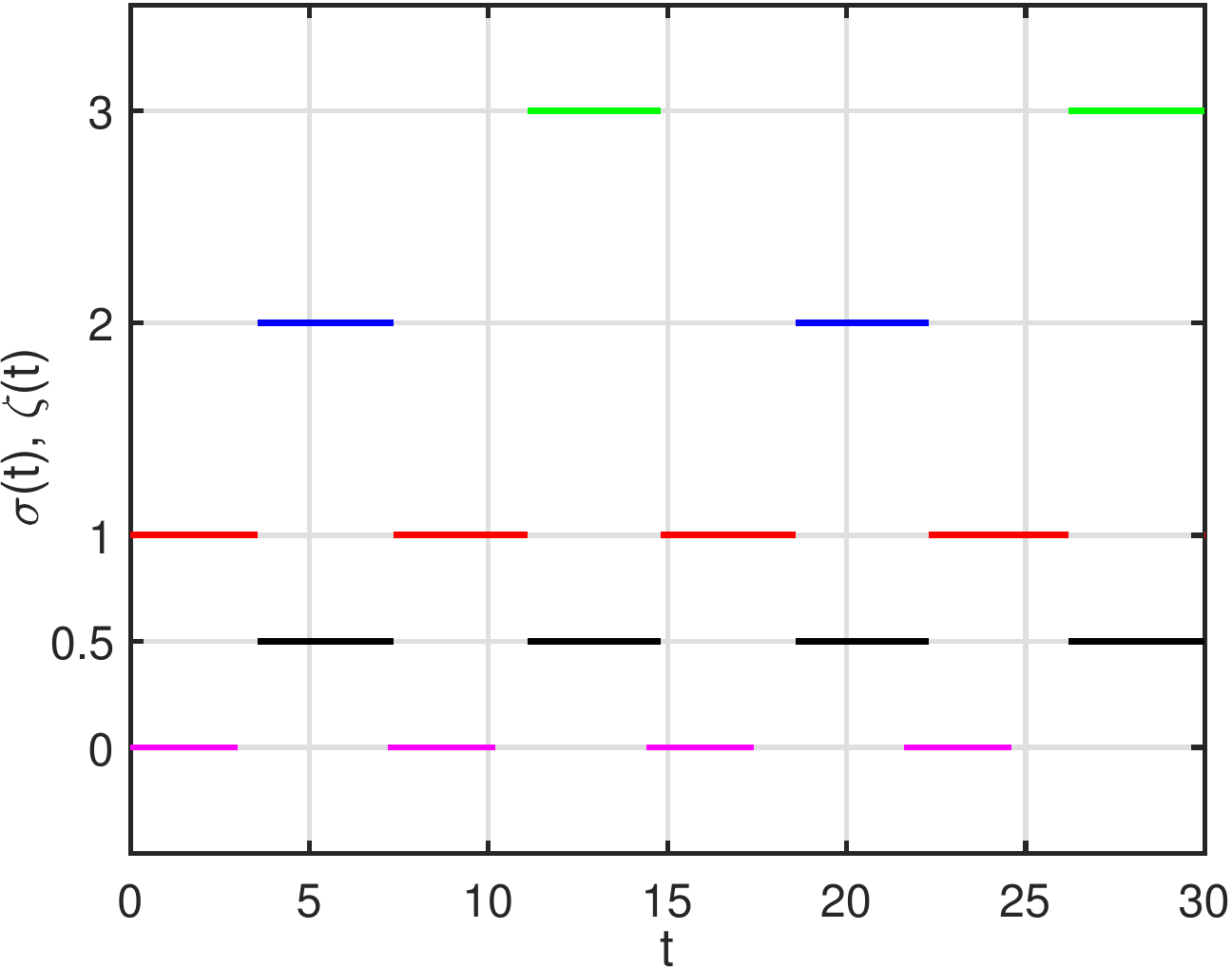}
        \caption{\(\sigma\) and \(\zeta\). \(0\) and \(0.5\)\\denote dynamics \(g_0\) and \(g_1\),\\respectively.}
        \label{fig:estimate_sw}
    \end{minipage}%
    \begin{minipage}{0.5\textwidth}
        \centering
        \vspace*{-0.5cm}
        \includegraphics[scale=0.3]{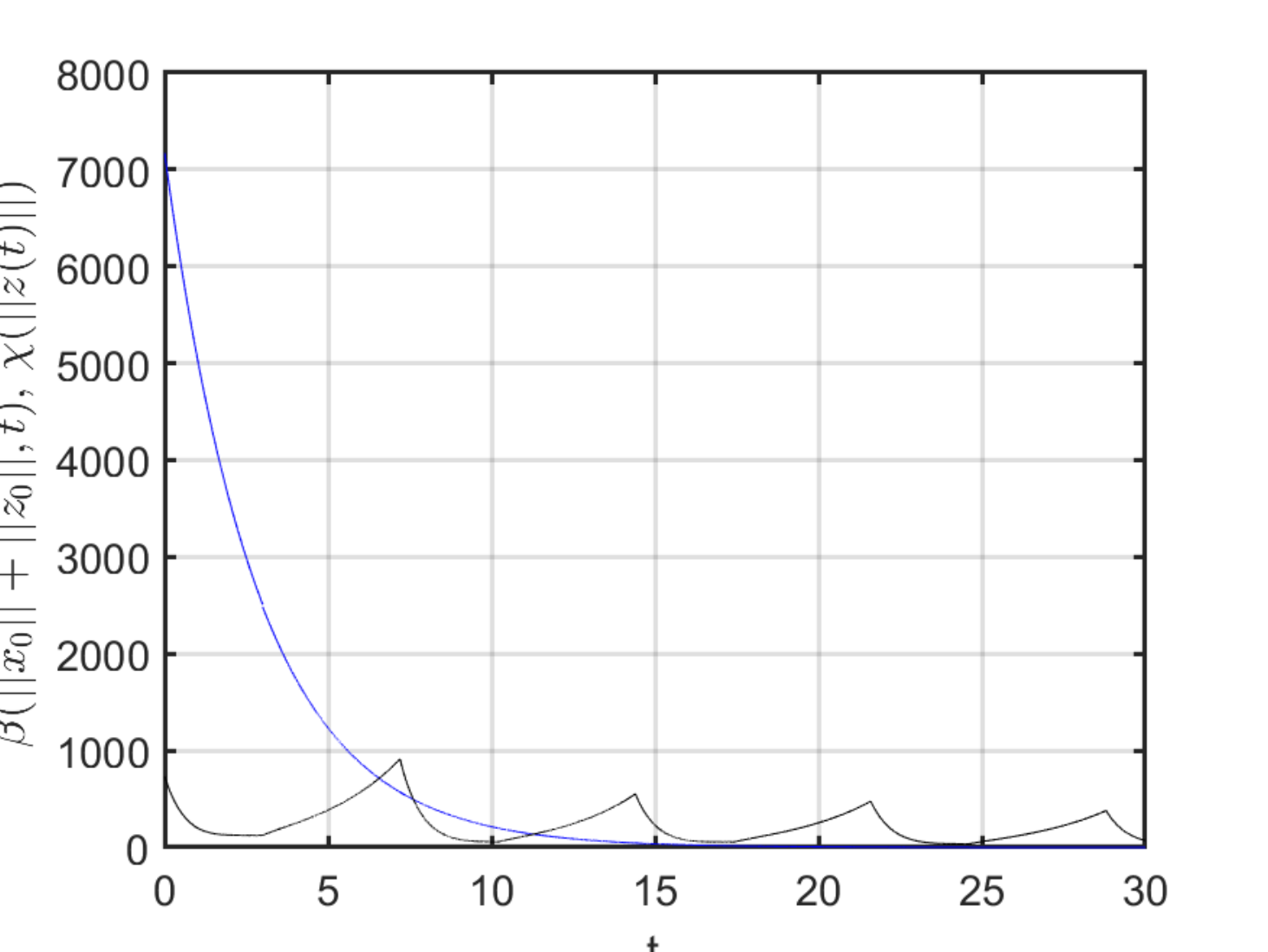}
        \caption{\(\overline{\beta}\bigl(\norm{x(0)}+\norm{z(0)},t\bigr)\) (blue line) and \(\overline{\chi}\bigl(\norm{z(t)}\bigr)\) (black line).}
        \label{fig:funcns}
    \end{minipage}
\end{figure}
%\subsection{State-norm estimation}
%\label{ss:num_ex-part_2}
%    We begin with our first method of constructing a state-norm estimator. Here, the exact switching instants and destinations of the switched system for which the estimator is designed, are assumed to be known or can be detected instantaneously. 
%    
   
   Next, we fix \(\lambda_s^*=3\), \(\lambda_u^* = 0.75\), and \(\tilde{\delta}=3\), \(\tilde{\Delta} = 4.2\) units of time. It follows that \(\lambda_s^*<\lambda_s\), \(\lambda_u^*>\lambda_u\), \( \lambda_s-\lambda_s^*+\lambda_u-\lambda_u^* = 0.48\),  \(\displaystyle{\tilde\delta<\check\delta,\:\tilde\Delta>\hat\Delta}\) and that conditions \eqref{e:key_condn2}-\eqref{e:key_condn3} and \eqref{e:newcondn4}-\eqref{e:newcondn3} hold with their left-hand sides taking values \(-0.6964\), \(-0.0277\), \(-0.8125\) and \(-0.0857\), respectively.%\(\lambda_s^*<\lambda_s\),  \(\lambda_u^*>\lambda_u\),

    We now construct the state-norm estimators \eqref{e:state_norm2}. We choose \(\sigma\in\mathcal{S}'_{\mathcal{R}}\), \(x_0\in[-1,1]\), \({\color{blue}v(t)}\in[-0.5,0.5]\) uniformly at random and \({z}(0) = 2\), and plot the resulting \(\bigl(\norm{x(t)}\bigr)_{t\geq 0}\) and its corresponding state-norm estimation \(\bigl(z(t)\bigr)_{t\geq 0}\) in Fig. \ref{fig:estimate}. %\footnote{As we shall see in our proof of Theorem \ref{t:mainres3}, the worst-case estimate of \(c\) satisfying \(z(t)\leq c\overline{z}(t)\) for all \(t\in[0,+\infty[\) is \(\exp\Bigl((\lambda_s^*+\lambda_u^*)\bigl(\frac{\check\delta\hat\Delta}{2\delta}+\hat\Delta\bigr)\Bigr)\). 
    %In our numerical experiment the order of activation of stable and unstable subsystems of the switched systems \eqref{e:swsys} and \eqref{e:state_norm2} coincide and a much smaller value of \(c\) works. In case of a mismatch between the order of activation of stable and unstable subsystems of \eqref{e:swsys} and \eqref{e:state_norm2}, the worst case estimate ensures that decaying \(\overline{z}\) is an upper envelope on growing \(z\) on relevant intervals of time.}
    The switching signals, \(\sigma, \zeta\) and the functions \(\overline{\beta}\bigl(\norm{x(0)}+\norm{z(0)},t\bigr)\) and \(\overline{\chi}\bigl(\norm{z(t)}\bigr)\) corresponding to Fig. \ref{fig:estimate} appear in Fig. \ref{fig:estimate_sw} and Fig. \ref{fig:funcns}, respectively. The assertion of Theorem \ref{t:mainres3} follows.%\footnote{The formula for computation of these functions is presented in proofs of Lemma \ref{lem:auxres3} and Theorem \ref{t:mainres3}.}}
%    
    %\begin{remark}
    %\label{rem:mismatch}
    %\rm{
            %}
    %\end{remark}
    %\begin{remark}
%    \label{rem:opt_est}
%    \rm{
%        Notice that the quality of estimation, i.e., how close to \(\norm{x(t)}\), the values of \(z(t)\) (resp., \(\overline{z}(t)\)) are, \(t\in[0,+\infty[\) --- in other words, the gain \(\overline\chi\), depends on the subsystems dynamics of the switched system \eqref{e:state_norm1} (resp, \eqref{e:state_norm2}). Thus, a study of an optimal choice of \(\lambda_s^*\) and \(\lambda_u^*\) (resp., \(\tilde\delta\) and \(\tilde\Delta\)), satisfying \eqref{e:key_condn1a}-\eqref{e:key_condn3} (resp., \eqref{e:newcondn1}-\eqref{e:newcondn3}), is relevant. However, in this note, we restrict our attention to construction of state-norm estimators for the switched system \eqref{e:swsys} and do not focus on optimality of the design. 
%    }
%    \end{remark}
%==================================================
\section{Concluding remarks}
\label{s:concln}
    We studied IOSS and state-norm estimation of continuous-time switched nonlinear systems under restricted switching. The stabilizing switching signals presented here restrict activation of unstable subsystems consecutively and this offer a scope for further generalization. We identify the design of state-norm estimators for switched systems operating under generalized switching signals in the above sense (\'{a} la involving frequency of switches between certain classes of subsystems \cite{ghi}), as a problem for future work.  
%==================================================
\section{Proofs of our results}
\label{s:all_proofs}
   Let \(]s,t]\subseteq[0,+\infty[\). We let \(\Nsw(s,t)\) denote the total number of switches on \(]s,t]\), \(\NswS(s,t)\) and \(\NswU(s,t)\) denote the total number of switches on \(]s,t]\) where a  stable and an unstable subsystem is activated, respectively, and \(\TswS(s,t)\) and \(\TswU(s,t)\) denote the total duration of activation of stable and unstable subsystems on \(]s,t]\), respectively. We have \(\Nsw(s,t) = \NswS(s,t)+\NswU(s,t)\) and \(t-s = \TswS(s,t)+\TswU(s,t)\).
   % We suppress the dependence of the above quantities on \(\sigma\) for notational simplicity. 
   The following auxiliary results will be useful.% in our proof of Theorem \ref{t:mainres}.
    \begin{lemma}
    \label{lem:auxres1}
        The following are true:
        %\begin{enumerate}[label = \alph*), leftmargin = *]
           % \item
           a) \(\Big\lfloor\frac{t-s}{\Delta}\Big\rfloor\leq\Nsw(s,t)\leq\Big\lfloor\frac{t-s}{\delta}\Big\rfloor\)
           and b) \(\NswU(s,t)\leq\Big\lfloor\frac{\Nsw(s,t)}{2}\Big\rfloor\).
       % \end{enumerate}
    \end{lemma}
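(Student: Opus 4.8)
The plan is to read both estimates off the admissible dwell-time window $[\delta,\Delta]$ of Definition \ref{d:adm-sw} together with an integrality argument. Fix $]s,t]$, put $n\Let\Nsw(s,t)\in\N$, and let $\tau_{k+1}<\cdots<\tau_{k+n}$ be the switching instants contained in $]s,t]$, so that $\tau_{k}\le s<\tau_{k+1}$ and $\tau_{k+n}\le t<\tau_{k+n+1}$. Here the instant $\tau_{k+n+1}$ exists because the maximum dwell time $\Delta$ forbids dwelling on any subsystem longer than $\Delta$, so the signal switches infinitely often; I would state this once at the outset and use it for both parts.

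For part (a) I would compare the window length with the dwell times of the intervals it overlaps, via the telescoping identity $t-s=(\tau_{k+1}-s)+\sum_{i=k+1}^{k+n-1}(\tau_{i+1}-\tau_i)+(t-\tau_{k+n})$. The interval $]s,t]$ meets $n+1$ maximal constancy intervals of $\sigma$ (two partial pieces at the ends and $n-1$ complete dwell intervals between the interior switches). Bounding the complete interior gaps below by $\delta$ forces $t-s$ to be at least a corresponding multiple of $\delta$, and after using that $n$ is an integer this yields $\Nsw(s,t)\le\lfloor(t-s)/\delta\rfloor$; symmetrically, since $]s,t]\subseteq\,]\tau_{k},\tau_{k+n+1}]$ and each of those $n+1$ dwell intervals has length at most $\Delta$, one gets $t-s<(\Nsw(s,t)+1)\Delta$, whence $\Nsw(s,t)>\frac{t-s}{\Delta}-1$ and, by integrality, $\Nsw(s,t)\ge\lfloor(t-s)/\Delta\rfloor$.

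For part (b) I would invoke condition \ref{prop3}: at no two consecutive switching instants are unstable subsystems activated. Listing the activated modes $\sigma(\tau_{k+1}),\dots,\sigma(\tau_{k+n})$ along the window, the positions carrying an index in $\P_U$ are therefore pairwise non-adjacent. I would then pair each such unstable activation with a neighbouring stable one, which is available because by Assumption \ref{a:adm-sw} together with \ref{prop3} every unstable mode is both preceded and followed by a stable mode; this exhibits an injection of the unstable-activating instants into the stable-activating ones, giving $\NswU(s,t)\le\NswS(s,t)$ and hence, since $\NswU(s,t)+\NswS(s,t)=\Nsw(s,t)$ is an integer, $\NswU(s,t)\le\lfloor\Nsw(s,t)/2\rfloor$.

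The main obstacle, shared by both parts, is the bookkeeping at the two endpoints of $]s,t]$: because $s$ and $t$ need not be switching instants, the end pieces are only \emph{partial} dwell intervals, so the clean sums over complete dwell intervals must be corrected by these boundary terms. In part (a) this is exactly where the strict inequalities have to be combined with integrality of the switch count to land on the stated floors, and in part (b) it is where an unstable activation at the very first or last switch of the window—whose paired stable activation may lie just outside $]s,t]$—must be handled so as not to spoil the injection. Making these endpoint cases precise is the step I expect to require the most care.
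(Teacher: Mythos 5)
Your overall strategy coincides with the paper's (which disposes of each part in a single sentence): part (a) by bounding the dwell intervals meeting $]s,t]$ below by $\delta$ and above by $\Delta$, part (b) by the non-adjacency of unstable activations imposed by condition \ref{prop3}. Your lower bound in (a) is complete and correct. However, the two endpoint issues you flag are not mere bookkeeping to be tidied up later; one of them changes the statement. For the upper bound in (a), your telescoping identity only guarantees that the $n-1$ \emph{interior} gaps are $\geq\delta$, which yields $\Nsw(s,t)\leq\lfloor (t-s)/\delta\rfloor+1$, one more than claimed; to remove the $+1$ you need the leading piece $\tau_{k+1}-s$ to be $\geq\delta$ as well, i.e., $s$ must itself be a switching instant (or $0=\tau_0$). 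For arbitrary $s$ the stated bound is false: a single switch inside a window of length less than $\delta$ gives $\Nsw(s,t)=1>\lfloor(t-s)/\delta\rfloor=0$. Since the lemma is only ever invoked with $s\in\{0\}\cup\{\tau_i\}$, the correct fix is to prove the bound under that hypothesis rather than to treat the left end-piece as a correction term that can be controlled.

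For part (b) the proposed injection cannot be completed, and no amount of care at the endpoints will rescue it. Pairing each unstable activation with an adjacent stable one fails exactly when the window's activation sequence reads $U,S,U,\ldots,S,U$: with $\Nsw(s,t)=2m+1$ this pattern is admissible under condition \ref{prop3} (provided $\sigma(\tau_k)\in\P_S$ just before the window), and it gives $\NswU(s,t)=m+1=\lceil\Nsw(s,t)/2\rceil>\lfloor\Nsw(s,t)/2\rfloor$; already one unstable-activating switch in the window gives $\NswU(s,t)=1>\lfloor 1/2\rfloor=0$. Non-adjacency only yields $\NswU(s,t)\leq\lceil\Nsw(s,t)/2\rceil$. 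The paper's own one-line proof of (b) glosses over the same off-by-one, and it is harmless downstream --- replacing $\lfloor\Nsw(0,t)/2\rfloor$ by $(\Nsw(0,t)+1)/2$ in the estimate of $\Xi(0,t)$ only adds a constant that is absorbed into $c_1$ --- but you should not present the injection as though it closes; prove the ceiling bound instead, or restrict to windows whose first and last activations are not both unstable.
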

    \begin{proof}
         %\begin{enumerate}[label = \alph*), leftmargin = *]
            %\item 
            a) Recall that \(\check\delta\geq\delta\), \(\hat\Delta\leq\Delta\) and \(\sigma\in\Sw'_{\mathcal{R}}\). Consequently, in view of conditions \ref{prop1}-\ref{prop2}, the minimum and maximum number of switches on \(]s,t]\) are caused when the switching signal, \(\sigma\), dwells on every subsystem for \(\Delta\) and \(\delta\) units of time, respectively.\\ %The assertion (a) of Lemma \ref{lem:auxres1} follows.\\
            %\item 
            b) In view of condition \ref{prop3}, the maximum number of times an unstable subsystem can be activated by \(\sigma\) on \(]s,t]\) is \(1/2\) the total number of switches caused by \(\sigma\) on \(]s,t]\). %The assertion (b) of Lemma \ref{lem:auxres1} follows.
       % \end{enumerate}
    \end{proof}
    %\qed
    \begin{lemma}
    \label{lem:auxres2}
        The following are true:
       % \begin{enumerate}[label = \alph*), leftmargin = *]
           % \item 
           a) \(\TswS(s,t)\geq\NswS(s,t)\check\delta\) and
            %\item 
           b) \(\TswU(s,t)\leq\NswU(s,t)\hat\Delta+\hat\Delta\).
        %\end{enumerate}
    \end{lemma}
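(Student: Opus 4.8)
The plan is to prove both bounds by a bookkeeping argument over the individual dwell intervals of $\sigma$ that meet $]s,t]$, exploiting the sharpened dwell-time windows obeyed by the elements of $\Sw'_{\mathcal{R}}$: by condition \ref{prop1} every activation of a \emph{stable} subsystem lasts at least $\check\delta$ units of time, while by condition \ref{prop2} every activation of an \emph{unstable} subsystem lasts at most $\hat\Delta$ units of time. First I would list the switching instants $\tau_i$ lying in $]s,t]$ together with the subsystem already active at time $s$, whose activating switch occurred at or before $s$ and is therefore \emph{not} counted in $\Nsw(s,t)$; this isolates the two ``partial'' intervals at the endpoints, the one straddling $s$ and the one truncated by $t$.

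For part a), I would attach to each of the $\NswS(s,t)$ switches to a stable subsystem in $]s,t]$ the stable dwell interval $]\tau_i,\tau_{i+1}]$ it opens. For every such $\tau_i$ with $\sigma(\tau_i)\in\P_S$ we have $\tau_{i+1}-\tau_i\geq\check\delta$ by \ref{prop1}, and for all but possibly the last stable switch the whole interval lies inside $]s,t]$, since the next stable switch also belongs to $]s,t]$ and forces $\tau_{i+1}\leq t$. Summing these disjoint contributions to $\TswS(s,t)$, and absorbing the truncation of the final stable interval using the stable dwell time already realized at the left endpoint, yields the lower bound $\NswS(s,t)\check\delta$.

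For part b) I would run the dual computation: each of the $\NswU(s,t)$ switches to an unstable subsystem in $]s,t]$ opens an interval of length at most $\hat\Delta$ by \ref{prop2}, so these contribute at most $\NswU(s,t)\hat\Delta$ to $\TswU(s,t)$, truncation at $t$ only decreasing the contribution. The sole remaining source of unstable activation inside $]s,t]$ is an unstable subsystem already running at time $s$; its activating switch predates $s$ and is thus uncounted in $\NswU(s,t)$, yet its length is again at most $\hat\Delta$ by \ref{prop2}. This single uncounted interval is precisely what produces the additive correction, giving $\TswU(s,t)\leq\NswU(s,t)\hat\Delta+\hat\Delta$.

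I expect the delicate point in both parts to be the handling of the two partial dwell intervals at the endpoints of $]s,t]$. For the upper bound in b) these pieces are harmless and in fact account cleanly for the $+\hat\Delta$ term, so the main obstacle is the lower bound in a): I must verify that the stable dwell time lost to truncation at $t$ is genuinely compensated and does not erode the bound. This is where the alternation structure forced by condition \ref{prop3} (no two consecutive unstable activations) together with the ordering of the stable switches inside $]s,t]$ must be used carefully, rather than treated as a routine boundary estimate.
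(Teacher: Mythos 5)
Your bookkeeping scheme is the same one the paper uses: enumerate the switching instants \(s<\overline\tau_1<\cdots<\overline\tau_{\Nsw(s,t)}\leq t\), split \(\TswS(s,t)\) and \(\TswU(s,t)\) into the partial interval \(]s,\overline\tau_1]\) plus the dwell intervals opened by the counted switches, and invoke \ref{prop1} and \ref{prop2} termwise. Your part b) is complete and correct: every counted unstable interval contributes at most \(\hat\Delta\) (truncation at \(t\) only helps), and the possibly unstable interval straddling \(s\) is exactly the source of the additive \(+\hat\Delta\).

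Part a), however, is left with a genuine gap, and the repair you sketch does not work. The deficit created by truncating the final stable dwell interval at \(t\) cannot in general be ``absorbed using the stable dwell time already realized at the left endpoint'': take \(\sigma(s)\in\P_U\) with a single switch in \(]s,t]\), to a stable subsystem at \(\overline\tau_1=t-\epsilon\). Then \(\NswS(s,t)=1\) but \(\TswS(s,t)=\epsilon\), there is no stable dwell at the left endpoint to compensate, and \(\TswS(s,t)\geq\NswS(s,t)\check\delta\) fails; the alternation condition \ref{prop3} is of no help because the obstruction sits entirely at the right endpoint (and even when \(\sigma(s)\in\P_S\), the portion of that dwell interval lying to the right of \(s\) can be arbitrarily small). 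You correctly isolate this as the delicate point but do not close it --- and it cannot be closed as stated. For what it is worth, the paper's own proof has the same defect: it sets \(\overline\tau_{\Nsw(s,t)+1}:=t\) and bounds every stable summand below by \(\check\delta\), which is unjustified for the last one. The statement your argument actually establishes is \(\TswS(s,t)\geq(\NswS(s,t)-1)\check\delta\); in the sole application of the lemma (bounding \(\Xi(\cdot,t)\) in the proof of Theorem \ref{t:mainres}) the lost term only inflates the additive constant \(c_1\) by \(\lambda_s\check\delta\) and is harmless.
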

   \begin{proof}
        Let \(s<\overline{\tau}_1<\overline{\tau}_2<\cdots<\overline{\tau}_{\Nsw(s,t)}\leq t\) be the switching instants of \(\sigma\) on \(]s,t]\). It follows that a) \(\displaystyle{\TswS(s,t) = \mathds{1}_{\bigl\{\sigma(s)\in\P_S\bigr\}}(\overline{\tau}_1-s)
            + \sum_{\substack{{i=1}\\{\sigma(\overline{\tau}_i)\in\P_S}\\{\overline{\tau}_{\Nsw(s,t)+1}:=t}}}^{\Nsw(s,t)} (\overline{\tau}_{i+1}-\overline{\tau}_{i})
            \geq \sum_{\substack{{i=1}\\{\sigma(\overline{\tau}_i)\in\P_S}\\{\overline{\tau}_{\Nsw(s,t)+1}:=t}}}^{\Nsw(s,t)} \check\delta = \NswS(s,t)\check\delta}\), and\\
            b) \(\displaystyle{\TswU(s,t) = \mathds{1}_{\bigl\{\sigma(s)\in\P_U\bigr\}}(\overline{\tau}_1-s)
            +}\displaystyle{\sum_{\substack{{i=1}\\{\sigma(\overline{\tau}_i)\in\P_U}\\{\overline{\tau}_{\Nsw(s,t)+1}:=t}}}^{\Nsw(s,t)} (\overline{\tau}_{i+1}-\overline{\tau}_{i})
            \leq \hat\Delta+\sum_{\substack{{i=1}\\{\sigma(\overline{\tau}_i)\in\P_U}\\{\overline{\tau}_{\Nsw(s,t)+1}:=t}}}^{\Nsw(s,t)} \hat\Delta = \NswU(s,t)\hat\Delta + \hat\Delta}\).
       %  \begin{enumerate}[label = \alph*), leftmargin = *]
%            \item We have
%        \begin{align*}
%            \TswS(s,t) &= \mathrm{1}_{\bigl\{\sigma(s)\in\P_S\bigr\}}(\overline{\tau}_1-s)
%            + \sum_{\substack{{i=1}\\{\sigma(\overline{\tau}_i)\in\P_S}\\{\overline{\tau}_{\Nsw(s,t)+1}:=t}}}^{\Nsw(s,t)} (\overline{\tau}_{i+1}-\overline{\tau}_{i})\\
%            &\geq \sum_{\substack{{i=1}\\{\sigma(\overline{\tau}_i)\in\P_S}\\{\overline{\tau}_{\Nsw(s,t)+1}:=t}}}^{\Nsw(s,t)} \check\delta = \NswS(s,t)\check\delta.
%        \end{align*}
%        \item We have
%        \begin{align*}
%            \TswU(s,t) &= \mathrm{1}_{\bigl\{\sigma(s)\in\P_U\bigr\}}(\overline{\tau}_1-s)
%            + \sum_{\substack{{i=1}\\{\sigma(\overline{\tau}_i)\in\P_U}\\{\overline{\tau}_{\Nsw(s,t)+1}:=t}}}^{\Nsw(s,t)} (\overline{\tau}_{i+1}-\overline{\tau}_{i})\\
%            &\leq \hat\Delta+\sum_{\substack{{i=1}\\{\sigma(\overline{\tau}_i)\in\P_U}\\{\overline{\tau}_{\Nsw(s,t)+1}:=t}}}^{\Nsw(s,t)} \hat\Delta = \NswU(s,t)\hat\Delta + \hat\Delta.
%        \end{align*}
%        \end{enumerate}
    \end{proof}
    
    We are now ready to present our proof of Theorem \ref{t:mainres}.
      \begin{proof}[Proof of Theorem \ref{t:mainres}]
        Consider a switching signal \(\sigma\in\Sw'_{\mathcal{R}}\). Let Assumptions \ref{assump:key1}-\ref{a:adm-dw} hold. We will show that the switched system \eqref{e:swsys} is IOSS under \(\sigma\).
         
        For a time interval \(]s,t]\subseteq[0,+\infty[\), we define 
        \[
            \Xi(s,t):= -\lambda_s\TswS(s,t)+\lambda_u\TswU(s,t)+(\ln\mu)\Nsw(s,t).
        \]

        Fix \(t>0\). In view of \eqref{e:key-prop2} - \eqref{e:key-prop3}, we have that %\(V_{\sigma(t)}(x(t))\) is bounded above by
%        \begin{align*}
%            &\mathrm{1}_{\bigl\{\sigma(\tau_{\Nsw(0,t)})\in\P_S\bigr\}}\times \Biggl(\exp\biggl(-\lambda_s\bigl(t-\tau_{\Nsw(0,t)}\bigr)\biggr)V_{\sigma(t)}(x(\tau_{\Nsw(0,t)}))\\
%            &\:\:+\biggl(\gamma_1\bigl(\norm{v}_{[0,t]}\bigr)+\gamma_2\bigl(\norm{y}_{[0,t]}\bigr)\biggr)\int_{\tau_{\Nsw(0,t)}}^{t}\exp\bigl(-\lambda_s(t-s)\bigr)ds\Biggr)\\
%            &+
%            \mathrm{1}_{\bigl\{\sigma(\tau_{\Nsw(0,t)})\in\P_U\bigr\}}\times
%            \Biggl(\exp\biggl(\lambda_u\bigl(t-\tau_{\Nsw(0,t)}\bigr)\biggr)V_{\sigma(t)}(x(\tau_{\Nsw(0,t)}))\\
%            &\:\:+\biggl(\gamma_1\bigl(\norm{v}_{[0,t]}\bigr)+\gamma_2\bigl(\norm{y}_{[0,t]}\bigr)\biggr)
%            \int_{\tau_{\Nsw(0,t)}}^{t}\exp\bigl(\lambda_u(t-s)\bigr)ds\Biggr).
%        \end{align*}
%         Applying \eqref{e:key-prop3} and iterating the above, we obtain
         \begin{align}
         \label{e:pf_step1}
            V_{\sigma(t)}(x(t))&\leq\psi_1(t)V_{\sigma(0)}(x(0))
            + \biggl(\gamma_1\bigl(\norm{v}_{[0,t]}\bigr)+\gamma_2\bigl(\norm{y}_{[0,t]}\bigr)\biggr)\psi_2(t),
         \end{align}
         where
         \begin{align}
         \label{e:psi1_defn}
            \psi_1(t) &= \exp\Bigl(\Xi(0,t)\Bigr),
         \end{align}
         and
         \begin{align}
         \label{e:psi2_defn}
            \psi_2(t) &=\hspace*{-0.1cm}\sum_{\substack{{i=0}\\{\sigma(\tau_i)\in\P_S}\\{\tau_{\Nsw(0,t)+1}:=t}}}^{\Nsw(0,t)}
            \Biggl(\exp\Bigl(\Xi(\tau_{i+1},t)\Bigr)
             \times\frac{1}{\lambda_s}\biggl(1-\exp\bigl(-\lambda_s(\tau_{i+1}-\tau_{i})\bigr)\biggr)\Biggr)\nonumber\\
             &\:\:\hspace*{-0.5cm}-\sum_{\substack{{i=0}\\{\sigma(\tau_i)\in\P_U}\\{\tau_{\Nsw(0,t)+1}:=t}}}^{\Nsw(0,t)}
            \Biggl(\exp\Bigl(\Xi(\tau_{i+1},t)\Bigr)
             \times\frac{1}{\lambda_u}\biggl(1-\exp\bigl(\lambda_u(\tau_{i+1}-\tau_{i})\bigr)\biggr)\Biggr).
         \end{align}
         An application of \eqref{e:key-prop1} to \eqref{e:pf_step1} leads to
         \begin{align*}
            \underline{\alpha}(\norm{x(t)})&\leq\psi_1(t)\overline{\alpha}(\norm{x(0)})
            + \biggl(\gamma_1\bigl(\norm{v}_{[0,t]}\bigr)+\gamma_2\bigl(\norm{y}_{[0,t]}\bigr)\biggr)\psi_2(t).
         \end{align*}
         By Definition \ref{d:ioss-cont}, for IOSS of the switched system \eqref{e:swsys}, we need to show that\\
         i) \(\overline{\alpha}(\star)\psi_1(\cdot)\) is bounded above by a class \(\KL\) function, and\\
         ii) \(\psi_2(\cdot)\) is bounded above by a constant.
         %\begin{enumerate}[label = \roman*), leftmargin = *]
%            \item\label{proof_step1} 
%            \item\label{proof_step2} 
%         \end{enumerate}

         We first verify i). Recall from Assumption \ref{assump:key1} that \(\overline\alpha\in\Kinfty\). Therefore, it suffices to show that \(\psi_1(\cdot)\) is bounded above by a function belonging to class \(\mathcal{L}\). 
         In view of Lemmas \ref{lem:auxres1}-\ref{lem:auxres2}, we have that \(\Xi(0,t)\) is at most equal to
          \begin{align*}
            &\:\:\lambda_u\hat\Delta-\lambda_s\check\delta\Nsw(0,t)+(\lambda_s\check\delta+\lambda_u\hat\Delta)
            \bigg\lfloor\frac{\Nsw(0,t)}{2}\bigg\rfloor+(\ln\mu)\Nsw(0,t)\\
            \leq&\:\:\lambda_u\hat\Delta-\lambda_s\check\delta\bigg\lfloor\frac{t}{\Delta}\bigg\rfloor+(\lambda_s\check\delta+\lambda_u\hat\Delta)
            \bigg\lfloor\frac{t}{2\delta}\bigg\rfloor+(\ln\mu)\bigg\lfloor\frac{t}{\delta}\bigg\rfloor\\
            \leq&\:\:\lambda_u\hat\Delta-\lambda_s\check\delta\biggl(\frac{t}{\Delta}-1\biggr)+
            (\lambda_s\check\delta+\lambda_u\hat\Delta)\biggl(\frac{t}{2\delta}\biggr)+(\ln\mu)\biggl(\frac{t}{\delta}\biggr)\\
            \leq&\:\:(\lambda_u\hat\Delta+\lambda_s\check\delta)+\Biggl(-\lambda_s\frac{\check\delta}{\Delta}+
            \lambda_s\frac{\check\delta}{2\delta}
            +\lambda_u\frac{\hat\Delta}{2\delta}+(\ln\mu)\frac{1}{\delta}\Biggr)t.
         \end{align*}
         By \eqref{e:maincondn}, we have that the above expression is at most equal to \(c_1-c_2t\) for some \(c_1\geq \lambda_u\hat\Delta+\lambda_s\check\delta\) and \(c_2 > 0\). Therefore, from \eqref{e:psi1_defn}, we obtain that
         \(
            \psi_1(t)\leq \exp\bigl(c_1-c_2 t\bigr).
         \)
         The right-hand side of the above inequality decreases as \(t\) increases and tends to \(0\) as \(t\tendsto+\infty\). Consequently, i) holds.

         We next verify ii). We have that \(\psi_2(t)\) is at most equal to
         \begin{align}
         \label{e:pf_step3}
 &= \frac{1}{\lambda_s}\sum_{\substack{{i=0}\\{\sigma(\tau_i)\in\P_S}\\{\tau_{\Nsw(0,t)+1}:=t}}}^{\Nsw(0,t)}
\Biggl(\exp\biggl(-\lambda_s\TswS(\tau_{i+1},t)+\lambda_u\TswU(\tau_{i+1},t)
+(\ln\mu)\Nsw(\tau_{i+1},t)\biggr)\nonumber\\
&\qquad\qquad\qquad\qquad\times\biggl(1-\exp\bigl(-\lambda_s(\tau_{i+1}-\tau_{i})\bigr)\biggr)\Biggr)\nonumber\\
&\:\:-\frac{1}{\lambda_u}\sum_{\substack{{i=0}\\{\sigma(\tau_i)\in\P_U}\\{\tau_{\Nsw(0,t)+1}:=t}}}^{\Nsw(0,t)}
\Biggl(\exp\biggl(-\lambda_s\TswS(\tau_{i+1},t)+\lambda_u\TswU(\tau_{i+1},t)
+(\ln\mu)\Nsw(\tau_{i+1},t)\biggr)\nonumber\\
&\qquad\qquad\qquad\qquad\times\biggl(1-\exp\bigl(\lambda_u(\tau_{i+1}-\tau_{i})\bigr)\biggr)\Biggr)\nonumber\\
&\leq \frac{1}{\lambda_s}\sum_{\substack{{i=0}\\{\sigma(\tau_i)\in\P_S}\\{\tau_{\Nsw(0,t)+1}:=t}}}^{\Nsw(0,t)}
\Biggl(\exp\biggl(-\lambda_s\TswS(\tau_{i+1},t)+\lambda_u\TswU(\tau_{i+1},t)
+(\ln\mu)\Nsw(\tau_{i+1},t)\biggr)\nonumber\\
%&\qquad\qquad\qquad\qquad\times\biggl(1-\exp\bigl(-\lambda_s(\tau_{i+1}-\tau_{i})\bigr)\biggr)\Biggr)\nonumber\\
&\:\:+\frac{1}{\lambda_u}\sum_{\substack{{i=0}\\{\sigma(\tau_i)\in\P_U}\\{\tau_{\Nsw(0,t)+1}:=t}}}^{\Nsw(0,t)}
\Biggl(\exp\biggl(-\lambda_s\TswS(\tau_{i+1},t)+\lambda_u\TswU(\tau_{i+1},t)
+(\ln\mu)\Nsw(\tau_{i+1},t)+\lambda_u(\tau_{i+1}-\tau_{i})\biggr)\nonumber\\
%&\qquad\qquad\qquad\qquad\times\biggl(1-\exp\bigl(\lambda_u(\tau_{i+1}-\tau_{i})\bigr)\biggr)\Biggr)\nonumber\\
&\leq\frac{1}{\lambda_s}\sum_{\substack{{i=0}\\{\sigma(\tau_i)\in\P_S}\\{\tau_{\Nsw(0,t)+1}:=t}}}^{\Nsw(0,t)}
\exp\biggl(-\lambda_s\TswS(\tau_{i+1},t)+\lambda_u\TswU(\tau_{i+1},t)
+(\ln\mu)\Nsw(\tau_{i+1},t)\biggr)\nonumber\\
&\:\:\:\:+\frac{1}{\lambda_u}\sum_{\substack{{i=0}\\{\sigma(\tau_i)\in\P_U}\\{\tau_{\Nsw(0,t)+1}:=t}}}^{\Nsw(0,t)}
\exp\biggl(-\lambda_s\TswS(\tau_{i},t)+\lambda_u\TswU(\tau_{i},t)
+(\ln\mu)\Nsw(\tau_{i},t)\biggr).
         \end{align}
         Now, fix \(\tau_i\), \(i\in\{0,1,\ldots,\Nsw(0,t)\}\). Applying the set of arguments employed in the verification of i) to the interval \(]\tau_i,t]\), we obtain that
         \begin{align}
         \label{e:pf_step4}
         \hspace*{-0.2cm}\Xi(\tau_i,t)\leq c_1-c_2(t-\tau_i)\:\:\text{and}\:\:\Xi(\tau_{i+1},t)\leq c_1-c_2(t-\tau_{i+1})
         \end{align}
         \(\text{for some}\:c_1\geq\lambda_s\check\delta+\lambda_u\hat\Delta\:\text{and}\:c_2 > 0\).
         An application of \eqref{e:pf_step4} to \eqref{e:pf_step3} leads us to \(\psi_2(t)\) is at most equal to
         \begin{align}
         \label{e:pf_step6}
            \frac{1}{\lambda_s}\sum_{i=0}^{\Nsw(0,t)}\exp\Bigl(c_1-c_2(t-\tau_{i+1})\Bigr)
            +\frac{1}{\lambda_u}\sum_{i=0}^{\Nsw(0,t)}\exp\Bigl(c_1-c_2(t-\tau_{i})\Bigr).
         \end{align}
         We will now show that both the sums in the above expression are bounded above by constants. We have
         \begin{align}
         \label{e:pf_step7}
            &\sum_{i=0}^{\Nsw(0,t)}\exp\biggl(-c_2(t-\tau_{i})\biggr)\nonumber\\
            \leq& \sum_{i=0}^{\big\lfloor\frac{t}{\delta}\big\rfloor}\exp\biggl(-c_2(t-\tau_{i})\biggr)
            \leq\exp\bigl(-c_2t\bigr)+
            \exp\biggl(-c_2\bigg\lfloor\frac{t}{\Delta}\bigg\rfloor\delta\biggr)+
            \cdots+\exp(-c_2\delta)\nonumber\\
            \leq& 1+\frac{1}{\exp\bigl(c_2\delta\bigr)-1}.
         \end{align}
         Applying the set of arguments as above, we also obtain that
         \begin{align}
         \label{e:pf_step8}
            \sum_{i=0}^{\Nsw(0,t)}\exp\biggl(c_1-c_2\bigl(t-\tau_{i+1}\bigr)\biggr)\leq\frac{1}{\exp\bigl(c_2\delta\bigr)-1}.
         \end{align}
         In view of \eqref{e:pf_step7}-\eqref{e:pf_step8}, we arrive at ii). Indeed, \(\psi_2(t)\leq \overline{\psi_2} =\)
         \begin{align*}
             \frac{1}{\lambda_s}\exp(c_1)\frac{1}{\exp(c_2\delta)-1}
            +\frac{1}{\lambda_u}\exp(c_1)\biggl(1+\frac{1}{\exp(c_2\delta)-1}\biggr).
         \end{align*}

         We conclude IOSS of the switched system \eqref{e:swsys} under \(\sigma\). Recall that \(\sigma\in\Sw'_{\mathcal{R}}\) was chosen arbitrarily. Consequently, the assertion of Theorem \ref{t:mainres} follows. In particular, we have that condition \eqref{e:ioss-cont} holds for all \(\sigma\in\Sw'_{\mathcal{R}}\) with \(\alpha(r) = r\), \(\beta(r,s) = \overline{\alpha}(r)\exp\bigl(c_1-c_2s\bigr)\), \(\chi_1(r) = \gamma_1(r)\overline{\psi}_2\) and \(\chi_2(r) = \gamma_2(r)\overline{\psi}_{2}\).
%
        % This completes our proof of Theorem \ref{t:mainres}.
    \end{proof}
%================================================    
      \begin{proof}[Proof of Proposition \ref{prop:nextres1}]
             Suppose that \(\Delta\geq 2\delta\). Then \(\displaystyle{\frac{1}{\Delta}\leq\frac{1}{2\delta}}\) implying that \(\check\delta\biggl(\displaystyle{\frac{1}{\Delta}-\frac{1}{2\delta}}\biggr)\leq 0\) (i.e.,
        \(-\lambda_s\check\delta\biggl(\displaystyle{\frac{1}{\Delta}-\frac{1}{2\delta}}\biggr)\geq 0\)). Consequently, the expression on the left-hand side of \eqref{e:maincondn} is non-negative.
         \end{proof}
%=================================================         
         \begin{proof}[Proof of Proposition \ref{prop:mainres2}]
             %Let \(V_p = V\) for all \(p\in\P\) and condition \eqref{e:main2} holds.
       (i) 
        We have \(\mu = 1\), and for any \(\check\delta,\hat\Delta\in[\delta,\Delta]\),
        \begin{align*}
            \:\:-\lambda_s \frac{\check\delta}{\Delta} + \lambda_s \frac{\check\delta}{2\delta} + \lambda_u \frac{\hat\Delta}{2\delta} + \ln\mu \frac{1}{\delta}
            \leq&\:\:-\lambda_s \frac{\delta}{\Delta} + \lambda_s \frac{\Delta}{2\delta} + \lambda_u \frac{\Delta}{2\delta}\\
            =&\:\:\frac{-2\delta^2\lambda_s+\lambda_s\Delta^2+\lambda_u\Delta^2}{2\delta\Delta}\\
            \leq&\:\:\frac{-\varepsilon_1}{2\delta\Delta}\:\:\text{for some}\:\varepsilon_1>0.
        \end{align*}
        Consequently, condition \eqref{e:maincondn} holds.\\
       %With \(\mu = 1\), for any \(\check\delta,\hat\Delta\in[\delta,\Delta]\),
%            \begin{align*}
%            \displaystyle{-\lambda_s \frac{\check\delta}{\Delta} + \lambda_s \frac{\check\delta}{2\delta} + \lambda_u \frac{\hat\Delta}{2\delta} + \ln\mu \frac{1}{\delta}}
%            \leq&
%            \displaystyle{-\lambda_s \frac{\delta}{\Delta} + \lambda_s \frac{\Delta}{2\delta} + \lambda_u \frac{\Delta}{2\delta}}\\
%            =&\frac{-2\delta^2\lambda_s+\lambda_s\Delta^2+\lambda_u\Delta^2}{2\delta\Delta}\\
%            \leq&\frac{-\varepsilon_1}{2\delta\Delta}
%            \displaystyle{\text{for some}\:\varepsilon_1>0}.
%            \end{align*} %Consequently, condition \eqref{e:maincondn} holds.
        %Let condition \eqref{e:main3} be satisfied.
        (ii) %Let condition \eqref{e:main3} be satisfied.
        We have that for any \(\check\delta,\hat\Delta\in[\delta,\Delta]\),
        \begin{align*}
            -\lambda_s \frac{\check\delta}{\Delta} + \lambda_s \frac{\check\delta}{2\delta} + \lambda_u \frac{\hat\Delta}{2\delta}+\ln\mu\frac{1}{\delta}
            \leq&\:\:-\lambda_s \frac{\delta}{\Delta} + \lambda_s \frac{\Delta}{2\delta} + \lambda_u \frac{\Delta}{2\delta}+\ln\mu\frac{1}{\delta}\\
            =&\:\:-\lambda_s\biggl(\frac{\delta}{\Delta}-\frac{\Delta}{2\delta}\biggr)+\lambda_u\frac{\Delta}{2\delta}
            +\ln\mu\frac{1}{\delta}\\
            \leq&\:\:-\varepsilon_2\:\:\text{for some}\:\varepsilon_2 > 0.
        \end{align*}
        Consequently, condition \eqref{e:maincondn} holds.\\
        %For any \(\check\delta,\hat\Delta\in[\delta,\Delta]\),
%        \begin{align*}
%            -\lambda_s \frac{\check\delta}{\Delta} + \lambda_s \frac{\check\delta}{2\delta} + \lambda_u \frac{\hat\Delta}{2\delta}+\ln\mu\frac{1}{\delta}
%            &\leq-\lambda_s \frac{\delta}{\Delta} + \lambda_s \frac{\Delta}{2\delta} + \lambda_u \frac{\Delta}{2\delta}+\ln\mu\frac{1}{\delta}\\
%            &=-\lambda_s\biggl(\frac{\delta}{\Delta}-\frac{\Delta}{2\delta}\biggr)
%            +\lambda_u\frac{\Delta}{2\delta}
%            +\ln\mu\frac{1}{\delta}\\
%            &\leq-\varepsilon_2\:\:\text{for some}\:\varepsilon_2 > 0.
%        \end{align*}
           %  Let \(\check\delta = \Delta\), \(\hat\Delta = \delta\), and condition \eqref{e:main4} hold. 
        (iii) %  Let \(\check\delta = \Delta\), \(\hat\Delta = \delta\), and condition \eqref{e:main4} hold. 
        We have
        \begin{align*}
            -\lambda_s \frac{\check\delta}{\Delta} + \lambda_s \frac{\check\delta}{2\delta} + \lambda_u \frac{\hat\Delta}{2\delta} + (\ln\mu) \frac{1}{\delta}
            =&\:\:-\lambda_s \frac{\Delta}{\Delta} + \lambda_s \frac{\Delta}{2\delta} + \lambda_u \frac{\delta}{2\delta} + (\ln\mu) \frac{1}{\delta}\\
            =&\:\:-\lambda_s\biggl(1-\frac{\Delta}{2\delta}\biggr)+\frac{\lambda_u}{2}+(\ln\mu)\frac{1}{\delta}\\
            \leq&-\varepsilon_3\:\:\text{for some}\:\varepsilon_3 > 0.
        \end{align*}
        Consequently, condition \eqref{e:maincondn} holds.\\
        %We have
        %\(\displaystyle{
%            -\lambda_s \frac{\check\delta}{\Delta} + \lambda_s \frac{\check\delta}{2\delta} + \lambda_u \frac{\hat\Delta}{2\delta}+ (\ln\mu) \frac{1}{\delta}
%            =-\lambda_s \frac{\Delta}{\Delta} + \lambda_s \frac{\Delta}{2\delta} + \lambda_u \frac{\delta}{2\delta} +}\)\\\({ (\ln\mu) \frac{1}{\delta}
%            =-\lambda_s\biggl(1-\frac{\Delta}{2\delta}\biggr)+\frac{\lambda_u}{2}
%            +(\ln\mu)\frac{1}{\delta}
%            \leq-\varepsilon_3\:\:\text{for some}\:\varepsilon_3 > 0.}
%        \)\\
        %Consequently, condition \eqref{e:maincondn} holds.
          % Let \(\check\delta = \delta\), \(\hat\Delta = \Delta\), and condition \eqref{e:main5} hold. 
         (iv)  %Let \(\check\delta = \delta\), \(\hat\Delta = \Delta\), and condition \eqref{e:main5} hold. 
         We have
        \begin{align*}
            -\lambda_s \frac{\check\delta}{\Delta} + \lambda_s \frac{\check\delta}{2\delta} + \lambda_u \frac{\hat\Delta}{2\delta} + (\ln\mu) \frac{1}{\delta}
            =&\:\:-\lambda_s \frac{\delta}{\Delta} + \lambda_s \frac{\delta}{2\delta} + \lambda_u \frac{\Delta}{2\delta} + (\ln\mu) \frac{1}{\delta}\\
            =&\:\:-\lambda_s\biggl(\frac{\delta}{\Delta}-\frac{1}{2}\biggr)+\lambda_u\frac{\Delta}{2\delta}+(\ln\mu)\frac{1}{\delta}\\
            \leq&-\varepsilon_4\:\:\text{for some}\:\varepsilon_4 > 0.
        \end{align*}
        Consequently, condition \eqref{e:maincondn} holds.
         %We have
       % \(\displaystyle{
%            -\lambda_s \frac{\check\delta}{\Delta} + \lambda_s \frac{\check\delta}{2\delta} + \lambda_u \frac{\hat\Delta}{2\delta} +
%             (\ln\mu) \frac{1}{\delta}
%            =-\lambda_s \frac{\delta}{\Delta} + \lambda_s \frac{\delta}{2\delta} + \lambda_u \frac{\Delta}{2\delta} +}\)\\\({ (\ln\mu) \frac{1}{\delta}
%            =-\lambda_s\biggl(\frac{\delta}{\Delta}-\frac{1}{2}\biggr)+
%            \lambda_u\frac{\Delta}{2\delta}+(\ln\mu)\frac{1}{\delta}
%            \leq-\varepsilon_4\:\:\text{for some}\:\varepsilon_4 > 0.}
%        \)
        %Consequently, condition \eqref{e:maincondn} holds.
           \end{proof}
%============================================================================== 
    The following auxiliary result will be useful in our proof of Theorem \ref{t:mainres3}.
    \begin{lemma}
    \label{lem:auxres3}
        The switched system
    \begin{align}
    \label{e:state_norm1}
        \dot{w}(t) = g_{\upsilon(t)}\bigl(w(t),v(t),y(t)\bigr),\:\:w(0)=w_0\geq 0,\:\:t\in[0,+\infty[
    \end{align}
    consisting of %the following components:\\
   % \begin{enumerate}[label=\alph*), leftmargin=*]
        (a) the subsystems \(g_0\bigl(w(t),v(t),y(t)\bigr) = -\lambda_s^*w(t)+\gamma_1\bigl(\norm{v(t)}\bigr)+\gamma_2\bigl(\norm{y(t)}\bigr)\) and \(g_1\bigl(w(t),v(t),y(t)\bigr) = \lambda_u^*w(t)+\gamma_1\bigl(\norm{v(t)}\bigr)+\gamma_2\bigl(\norm{y(t)}\bigr)\),
        %\begin{align*}
%           \dot{w}(t) &= g_0\bigl(w(t),v(t),y(t)\bigr) = -\lambda_s^*w(t)+\gamma_1\bigl(\norm{v(t)}\bigr)+\gamma_2\bigl(\norm{y(t)}\bigr),\\
%           \dot{w}(t) &= g_1\bigl(w(t),v(t),y(t)\bigr) = \lambda_u^*w(t)+\gamma_1\bigl(\norm{v(t)}\bigr)+\gamma_2\bigl(\norm{y(t)}\bigr),
%        \end{align*}
        where \(\lambda_s^*\), \(\lambda_u^* > 0\) obey conditions \eqref{e:key_condn1a}-\eqref{e:key_condn3}, and (b) the switching signal \(\upsilon:[0,+\infty[\to\{0,1\}\) that obeys \(\upsilon(t) = 0\), if \(\sigma(t)\in\P_S\) and \(\upsilon(t) = 1\), if \(\sigma(t)\in\P_U\), is a state-norm estimator for \eqref{e:swsys} with \(\sigma\in\mathcal{S}'_{\mathrm{R}}\).
      %  \begin{align*}
%            \zeta(t) = 
%            \begin{cases}
%                0,\:\:\text{if}\:\sigma(t)\in\P_S,\\
%                1,\:\:\text{if}\:\sigma(t)\in\P_U.
%            \end{cases}
%        \end{align*}
  %  \end{enumerate}
    \end{lemma}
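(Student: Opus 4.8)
The plan is to verify the two requirements of Definition \ref{d:norm_est} for the scalar switched system \eqref{e:state_norm1} driven by the signal \(\upsilon\) that copies the stable/unstable pattern of \(\sigma\in\Sw'_{\mathcal{R}}\). Since \(\upsilon(t)=0\) exactly when \(\sigma(t)\in\P_S\), the activation times \(\TswS(s,t)\), \(\TswU(s,t)\) are common to \(\sigma\) and \(\upsilon\), and \(\upsilon\) inherits the dwell-time structure of \(\Sw'_{\mathcal{R}}\); hence Lemmas \ref{lem:auxres1}--\ref{lem:auxres2} remain available. Throughout I write \(\rho(t):=V_{\sigma(t)}(x(t))\), \(\overline\gamma(t):=\gamma_1(\norm{v(t)})+\gamma_2(\norm{y(t)})\), and I set \(\Xi^*(s,t):=-\lambda_s^*\TswS(s,t)+\lambda_u^*\TswU(s,t)\), the estimator analogue of the quantity \(\Xi(s,t)\) from the proof of Theorem \ref{t:mainres}.

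First I would establish part (a), ISS of \eqref{e:state_norm1}. Because \(w\) is scalar and continuous (no jumps), variation of constants gives \(w(t)=e^{\Xi^*(0,t)}w_0+\int_0^t e^{\Xi^*(s,t)}\overline\gamma(s)\,ds\). I then repeat the counting argument from the proof of Theorem \ref{t:mainres}: Lemmas \ref{lem:auxres1}--\ref{lem:auxres2} bound \(\Xi^*(s,t)\le c_1^*-c_2^*(t-s)\) with \(c_2^*>0\), the strict negativity of the slope being exactly condition \eqref{e:key_condn2} (which is \eqref{e:maincondn} with \(\lambda_s,\lambda_u\) replaced by \(\lambda_s^*,\lambda_u^*\) and with no \(\ln\mu\) term, reflecting the absence of jumps in \(w\)). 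This yields \(w(t)\le e^{c_1^*-c_2^*t}w_0+\tfrac{e^{c_1^*}}{c_2^*}\norm{\overline\gamma}_{[0,t]}\), and since \(\gamma_1,\gamma_2\in\Kinfty\) are nondecreasing the forcing term is dominated by \(\gamma_1(\norm{v}_{[0,t]})+\gamma_2(\norm{y}_{[0,t]})\), establishing ISS with respect to \((v,y)\).

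The substantive part is (b). I would introduce the comparison variable \(D(t):=\rho(t)-w(t)\). Using \eqref{e:key-prop2}, \eqref{e:key_prop4} for \(\dot\rho\) and the estimator dynamics for \(\dot w\), the common forcing \(\overline\gamma\) cancels on every subinterval: on stable intervals \(\dot D\le-\lambda_s D-(\lambda_s-\lambda_s^*)w\) and on unstable intervals \(\dot D\le\lambda_u D-(\lambda_u^*-\lambda_u)w\), the two \(w\)-terms being nonpositive by \eqref{e:key_condn1a}; at each switching instant \(\tau_i\), \eqref{e:key-prop3} and continuity of \(w\) give \(D(\tau_i^+)\le\mu D(\tau_i^-)+(\mu-1)w(\tau_i)\). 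Dropping the nonpositive continuous \(w\)-terms and applying the impulsive comparison lemma (and \(D(0)\le\rho(0)\le\overline\alpha(\norm{x_0})\)) yields \(D(t)\le e^{\Xi(0,t)}\overline\alpha(\norm{x_0})+(\mu-1)\sum_{\tau_i\in]0,t]}e^{\Xi(\tau_i,t)}w(\tau_i)\).

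The key observation that collapses the jump sum onto the \emph{current} value \(w(t)\) is monotonicity of the scalar estimator: since \(\overline\gamma\ge0\) and \(w\ge0\), the integral representation gives \(w(t)\ge e^{\Xi^*(\tau_i,t)}w(\tau_i)\), hence \(e^{\Xi(\tau_i,t)}w(\tau_i)\le e^{\Xi(\tau_i,t)-\Xi^*(\tau_i,t)}w(t)\). Writing \(\Xi(\tau_i,t)-\Xi^*(\tau_i,t)=-(\lambda_s-\lambda_s^*)\TswS(\tau_i,t)+(\lambda_u-\lambda_u^*)\TswU(\tau_i,t)+(\ln\mu)\Nsw(\tau_i,t)\) and bounding \(\TswS=(t-\tau_i)-\TswU\), \(\TswU\le\NswU\hat\Delta+\hat\Delta\), \(\NswU\le\Nsw/2\), \(\Nsw\le(t-\tau_i)/\delta\) via Lemmas \ref{lem:auxres1}--\ref{lem:auxres2} (using \(\lambda_s-\lambda_s^*+\lambda_u-\lambda_u^*\ge0\) from \eqref{e:key_condn1a} to keep the inequality directions correct), I find the slope in \((t-\tau_i)\) equals precisely the left-hand side of \eqref{e:key_condn3}, which is strictly negative; thus \(\Xi(\tau_i,t)-\Xi^*(\tau_i,t)\le\tilde c_1-\tilde c_2(t-\tau_i)\) with \(\tilde c_2>0\). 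The geometric-series estimate of \eqref{e:pf_step7}--\eqref{e:pf_step8} then bounds the jump sum by \(C'w(t)\) for a constant \(C'\), giving \(\rho(t)=D(t)+w(t)\le e^{c_1-c_2t}\overline\alpha(\norm{x_0})+(1+C')w(t)\). Finally I invert \(\underline\alpha\) via \eqref{e:key-prop1} and use the weak triangle inequality \(\underline\alpha^{-1}(a+b)\le\underline\alpha^{-1}(2a)+\underline\alpha^{-1}(2b)\) to produce \(\overline\beta\) from the decaying term (with \(\norm{x_0}\le\norm{x_0}+\norm{w_0}\)) and \(\overline\chi(r):=\underline\alpha^{-1}(2(1+C')r)\in\Kinfty\), which is exactly \eqref{e:norm_condn}. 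The main obstacle is this last comparison step: recognizing that subtracting \(w\) cancels the inputs, that nonnegativity of \(w\) converts past estimator values into the current one, and that the residual exponent is controlled precisely by \eqref{e:key_condn3}.
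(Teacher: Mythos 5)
Your proposal is correct and follows essentially the same route as the paper's proof: ISS of the estimator via the Theorem \ref{t:mainres} machinery with \(\mu=1\) and condition \eqref{e:key_condn2}, then the comparison function \(V_{\sigma(t)}(x(t))-w(t)\) with input cancellation, the jump correction \((\mu-1)w(\tau_i)\), conversion of past estimator values to \(w(t)\) via \(w(t)\geq\exp\bigl(\Xi^*(\tau_i,t)\bigr)w(\tau_i)\), and control of the residual exponent by \eqref{e:key_condn3} followed by a geometric-series bound. Your notation \(D\), \(\Xi^*\) corresponds to the paper's \(W\), \(\Gamma^*\); your explicit use of the weak triangle inequality for \(\underline\alpha^{-1}\) in the final step is, if anything, slightly more careful than the paper's.
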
          
           \begin{proof}%[Proof of Theorem \ref{t:mainres2}]
               % We will verify that \eqref{e:state_norm1} satisfies the following:
%               % \begin{enumerate}[label=\roman*), leftmargin=*]
%                    %\item 
%                    i) It is ISS with respect to \((v,y)\) and
%                    %\item 
%                    ii) condition \eqref{e:norm_condn} holds.
%                %\end{enumerate}
                  Let \(0=:\kappa_0<\kappa_1<\cdots\) be the switching instants of \(\upsilon\). We let \(\Nsw_w(s,t)\), \(\TswS_w(s,t)\) and \(\TswU_w(s,t)\) denote the total number of switches, total duration of activation of the subsystems \(0\) and \(1\) of the switched system \eqref{e:state_norm1} on an interval \(]s,t]\subseteq[0,+\infty[\) of time, respectively. It follows that 
%    \begin{align*}
        \(\TswS_w(s,t) = \TswS(s,t)\:\:\text{and}\:\:\TswU_w(s,t) = \TswU(s,t)\:\:\text{for all}\:]s,t]\subseteq[0,+\infty[\).
%    \end{align*}
                %For a time interval \(]s,t]\subseteq[0,+\infty[\), 
                We define 
                \begin{align*}
                    \Gamma(s,t) &:= -\lambda_s\TswS(s,t)+\lambda_u\TswU(s,t),\\
                    \Gamma^*(s,t) &:= -\lambda_s^*\TswS(s,t)+\lambda_u^*\TswU(s,t),\\
                    \Gamma_w(s,t) &:= -\lambda_s^*\TswS_w(s,t)+\lambda_u^*\TswU_w(s,t),\\
                    \intertext{and}
                    \overline\Gamma(s,t) &:= -(\lambda_s-\lambda_s^*)\TswS(s,t)+(\lambda_u-\lambda_u^*)\TswU(s,t).
                \end{align*}
                
               % We first verify i). 
                Let \(V_0(w) = V_1(w) = \frac{1}{2}w^2\). Clearly, Assumption \ref{assump:key1} holds with the rate of decay (resp., growth) of \(V_0\) (resp., \(V_1\)) along the subsystems dynamics \(g_0\) (resp., \(g_1\)) being \(\lambda_s^*\) (resp., \(\lambda_u^*\)). The set of admissible switches is \(\{(0,1),(1,0)\}\), and Assumption \ref{assump:key2} holds with \(V_0(w) = 1\times V_1(w)\) and \(V_1(w) = 1\times V_0(w)\).
                With \(\mu=1\), following our proof of Theorem \ref{t:mainres}, we will show that 
                A) \(\psi_1^w(t):= \exp\Bigl(\Gamma_w(0,t)\Bigr)\) is bounded above by a function belonging to class \(\mathcal{L}\), and
                B) \(\displaystyle{\psi_2^w(t)\leq \frac{1}{\lambda_s^*}\sum_{\substack{{i=0}\\{\upsilon(\kappa_i)=0}\\\kappa_{\Nsw_w(0,t)+1}:=t}}^{\Nsw_w(0,t)}\exp\Bigl(\Gamma_w(\kappa_{i+1},t)\Bigr) + \frac{1}{\lambda_u^*}\sum_{\substack{{i=0}\\{\upsilon(\kappa_i)=0}\\\kappa_{\Nsw_w(0,t)+1}:=t}}^{\Nsw_w(0,t)}\exp\Bigl(\Gamma_w(\kappa_{i},t)\Bigr)}\) is bounded above by a constant.
                
                Now, 
                \[
                    \Gamma_w(0,t)\leq(\lambda_s^*\check\delta+\lambda_u^*\hat\Delta)+\Bigl(-\lambda_s^*\frac{\check\delta}{\Delta}+\lambda_s^*\frac{\check\delta}{2\delta}+\lambda_u^*\frac{\hat\Delta}{2\delta}\Bigr)t.
                 \]
                 From condition \eqref{e:key_condn2} it follows that 
                 \[
                    \psi_1^w(t)\leq\exp(\overline{c}_1-\overline{c}_2t)\:\:\text{for some}\:\:\overline{c}_1\geq \lambda_s^*\check\delta+\lambda_u^*\hat\Delta\:\:\text{and}\:\:\overline{c}_2 > 0.
                 \] 
                 Clearly, the right-hand side of the above inequality is a class \(\mathcal{L}\) function. Similarly, \[
                    \Gamma_w(\kappa_i,t)\leq \overline{c}_1-\overline{c}_2(t-\kappa_i)\:\:\text{and}\:\: \Gamma_w(\kappa_{i+1},t)\leq \overline{c}_1-\overline{c}_2(t-\kappa_{i+1})
                 \] 
                 for some for some \(\overline{c}_1\geq \lambda_s^*\check\delta+\lambda_u^*\hat\Delta\) and \(\overline{c}_2 > 0\). Under similar arguments employed in our proof of Theorem \ref{t:mainres}, both the sums \(\displaystyle{ \sum_{{i=0}}^{\Nsw_w(0,t)}\exp\Bigl(\overline{c}_1-\overline{c}_2(t-\kappa_{i+1})\Bigr)}\) and \(\displaystyle{ \sum_{{i=0}}^{\Nsw_w(0,t)}\exp\Bigl(\overline{c}_1-\overline{c}_2(t-\kappa_{i})\Bigr)}\) are bounded. Consequently, ISS of the switched system \eqref{e:state_norm1} follows.

                As \(\gamma_1\bigl(\norm{v(t)}\bigr)+\gamma_2\bigl(\norm{y(t)}\bigr)\geq 0\), we have
                \begin{align*}
                    g_0\bigl(w(t),v(t),y(t)\bigr) \geq -\lambda_s^*w(t)\:\:\text{and}\:\:
                    g_1\bigl(w(t),v(t),y(t)\bigr) \geq \lambda_u^*w(t).
                \end{align*}
                Thus, 
                \[
                    w(t)\geq\exp\Bigl(\Gamma_w(0,t)\Bigr)w_0 = \exp\Bigl(\Gamma^*(0,t)\Bigr)w_0.
                \]
               % \begin{align*}
%                    z(t)&\geq \exp\biggl(-\lambda_s^*\TswSz(0,t)+\lambda_u^*\TswUz(0,t)\biggr)z_0\\
%                    &= \exp\biggl(-\lambda_s^*\TswS(0,t)+\lambda_u^*\TswU(0,t)\biggr)z_0.
%                \end{align*}
                As \(w_0\geq 0\), it follows that 
                %\begin{align*}
                    \[
                        w(t)\geq\exp\Bigl(\Gamma^*(0,t)\Bigr)w_0\geq 0\:\text{for all}\:t\in[0,+\infty[.
                    \]
               % \end{align*}
                Moreover, for all \(0\leq\kappa_i\leq t\), we have
               % \begin{align*}
                    \[
                        w(t)\geq\exp\Bigl(\Gamma^*(\kappa_i,t)\Bigr)w(\kappa_i)
                    \]
               % \end{align*}
                which can be rewritten as
                \begin{align}
                \label{e:pf2_step1a}
                    w(\kappa_i)\leq\exp\Bigl(-\Gamma^*(\kappa_i,t)\Bigr)w(t).
                \end{align}
                
                We define a function 
                \[
                    W(t) := V_{\sigma(t)}(x(t))-w(t).
                \]
                It follows that
                %\begin{align*}
                   % \(\dot{W}(t) = \dot{V}_{\sigma(t)}(x(t))-\dot{w}(t)
%                    \leq \mathds{1}_{\{\sigma(t)\in\P_S\}}(-\lambda_s)V_{\sigma(t)}(x(t)) + \mathds{1}_{\{\sigma(t)\in\P_U\}}(\lambda_u)V_{\sigma(t)}(x(t))
%                     -\biggl(\mathds{1}_{\{\zeta(t)\in\P_S\}}(-\lambda_s^*)w(t) + \mathds{1}_{\{\zeta(t)\in\P_U\}}(\lambda_u^*)w(t)\biggr)\)\\
%                   \( < 
%                    \mathds{1}_{\{\sigma(t)\in\P_S\}}(-\lambda_s)
%                    W(t) + \mathds{1}_{\{\sigma(t)\in\P_U\}}(\lambda_u)W(t).\)
                \begin{align*}
        \dot{W}(t) &= \dot{V}_{\sigma(t)}(x(t))-\dot{w}(t)\\
        &\leq \mathrm{1}_{\{\sigma(t)\in\mathcal{P}_S\}}(-\lambda_s)V_{\sigma(t)}(x(t))+\mathrm{1}_{\{\sigma(t)\in\mathcal{P}_U\}}(\lambda_u)V_{\sigma(t)}(x(t))\\
        &\quad\quad\quad\quad-(\mathrm{1}_{\{\zeta(t)=0\}}(-\lambda_s^*)w(t)+\mathrm{1}_{\{\zeta(t)=1}(\lambda_u^*)w(t))\\
        &= \mathrm{1}_{\{\sigma(t)\in\mathcal{P}_S\}}(-\lambda_s)V_{\sigma(t)}(x(t))+\mathrm{1}_{\{\sigma(t)\in\mathcal{P}_U\}}(\lambda_u)V_{\sigma(t)}(x(t))\\
        &\quad\quad\quad\quad-(\mathrm{1}_{\{\sigma(t)\in\mathcal{P}_S\}}(-\lambda_s^*)w(t)+\mathrm{1}_{\{\sigma(t)\in\mathcal{P}_U\}}(\lambda_u^*)w(t))\\
        %&= -\mathrm{1}_{\{\sigma(t)\in\mathcal{P}_S\}}(\lambda_s)V_{\sigma(t)}(x(t))+\mathrm{1}_{\{\sigma(t)\in\mathcal{P}_U\}}(\lambda_u)V_{\sigma(t)}(x(t))\\
       % &\quad\quad\quad\quad+\mathrm{1}_{\{\sigma(t)\in\mathcal{P}_S\}}(\lambda_s^*)w(t)-\mathrm{1}_{\{\sigma(t)\in\mathcal{P}_U\}}(\lambda_u^*)w(t)\\
     %\end{align*}
     %Since \(\lambda_s^*<\lambda_s\) and \(\lambda_u^*\geq \lambda_u\), the above expression is strictly smaller than 
     %\begin{align*}
        &<-\mathrm{1}_{\{\sigma(t)\in\mathcal{P}_S\}}(\lambda_s)V_{\sigma(t)}(x(t))+\mathrm{1}_{\{\sigma(t)\in\mathcal{P}_U\}}(\lambda_u)V_{\sigma(t)}(x(t))\\
        &\quad\quad\quad\quad+\mathrm{1}_{\{\sigma(t)\in\mathcal{P}_S\}}(\lambda_s)w(t)-\mathrm{1}_{\{\sigma(t)\in\mathcal{P}_U\}}(\lambda_u)w(t)\\
        %&=\mathrm{1}_{\{\sigma(t)\in\mathcal{P}_S\}}(-\lambda_s)(V_{\sigma(t)}(x(t))-w(t))+\mathrm{1}_{\{\sigma(t)\in\mathcal{P}_U\}}(\lambda_u)(V_{\sigma(t)}(x(t))-w(t))\\
        &=\mathrm{1}_{\{\sigma(t)\in\mathcal{P}_S\}}(-\lambda_s)W(t)+\mathrm{1}_{\{\sigma(t)\in\mathcal{P}_U\}}(\lambda_u)W(t).
     \end{align*}
                    
                %\end{align*}
                Now, for any interval \(]\tau_i,\tau_{i+1}]\),\\
                Case I: if \(\upsilon(t) = 0\) on \(]\tau_i,\tau_{i+1}]\), then 
               % \begin{align*}
                  % \( W(\tau_{i+1}) 
                   % \leq 
                   %  \mu\exp\Bigl(-\lambda_s(\tau_{i+1}-\tau_{i})\Bigr)W(\tau_i)+(\mu-1)w(\tau_{i+1})\), and 
                   \begin{align*}
                    W(\tau_{i+1}) &= V_{\sigma(\tau_{i+1})}\bigl(x(\tau_{i+1})\bigr)-w(\tau_{i+1})\\
                    &\leq \mu V_{\sigma(\tau_i)}\bigl(x(\tau_{i+1})\bigr)-\mu w(\tau_{i+1})+(\mu-1)w(\tau_{i+1})\\
                    &\leq\mu\exp\Bigl(-\lambda_s(\tau_{i+1}-\tau_{i})\Bigr)V_{\sigma(\tau_i)}\bigl(x(\tau_i)\bigr)
                    -\mu\exp\Bigl(-\lambda_s(\tau_{i+1}-\tau_{i})\Bigr)w(\tau_i) + (\mu-1)w(\tau_{i+1})\\
                    &= \mu\exp\Bigl(-\lambda_s(\tau_{i+1}-\tau_{i})\Bigr)W(\tau_i)+(\mu-1)w(\tau_{i+1}).
                \end{align*}
                Case II: if \(\upsilon(t) = 1\) on \(]\tau_i,\tau_{i+1}]\), then 
                 \begin{align*}
                    W(\tau_{i+1}) &= V_{\sigma(\tau_{i+1})}(x(\tau_{i+1})) - w(\tau_{i+1})\\
                    &\leq \mu V_{\sigma(\tau_i)}(x(\tau_{i+1}))-\mu w(\tau_{i+1})+(\mu-1)w(\tau_{i+1})\\
                    &\leq \mu\exp\Bigl(\lambda_u(\tau_{i+1}-\tau_{i})\Bigr)V_{\sigma(\tau_i)}(x(\tau_i))
                    -\mu\exp\Bigl(\lambda_u(\tau_{i+1}-\tau_{i})\Bigr)w(\tau_i)+(\mu-1)w(\tau_{i+1})\\
                    &=\mu\exp\Bigl(\lambda_u(\tau_{i+1}-\tau_i)\Bigr)W(\tau_i)+(\mu-1)w(\tau_{i+1}).
                 \end{align*}
               % \end{align*}
                % if \(\upsilon(t) = 1\), then %on \(]\tau_i,\tau_{i+1}]\). We have
                 %\begin{align*}
                  %  \(W(\tau_{i+1}) 
                  %  \leq 
                  %  \mu\exp\Bigl(\lambda_u(\tau_{i+1}-\tau_i)\Bigr)W(\tau_i)+(\mu-1)w(\tau_{i+1}).\)
                 %\end{align*}
                 Iterating the above from \(i=0\) to \(\Nsw(0,t)\) and applying \eqref{e:pf2_step1a}, we arrive at
                 %\begin{align*}
                    \[
                        \displaystyle{W(t)\leq \exp\Bigl(\Xi(0,t)\Bigr)W(0)
                     +(\mu-1)w(t)\sum_{i=1}^{\Nsw(0,t)}\biggl(\exp\Bigl(\bigl(\Nsw(0,t)-i\bigr)\ln\mu\Bigr)}\\
                    {\times
                    \exp\Bigl(\overline\Gamma(\tau_i,t)\Bigr)\biggr).}
                    \]
                 %\end{align*}
                
                 From the proof of Theorem \ref{t:mainres}, we have that 
                 \[
                    \Xi(0,t)\leq c_1 - c_2 t\:\:\text{for some}\:c_1\geq\lambda_s\check\delta+\lambda_u\hat\Delta\:\text{and}\:c_2>0.
                 \] %for some 
%                 \begin{align*}
%                    -\lambda_s\TswS(0,t)+\lambda_u\TswU(0,t)+\Nsw(0,t)\ln\mu\leq c_1-c_2t
%                 \end{align*}
                 %for some \(\). 
                 Now,
                 \[
                    \displaystyle{\sum_{i=1}^{\Nsw(0,t)}\exp\Bigl((\Nsw(0,t)-i)\ln\mu+\overline\Gamma(\tau_i,t)\Bigr) = \sum_{i=1}^{\Nsw(0,t)}\exp\biggl(\Nsw(\tau_i,t)\ln\mu}
                 {+\overline\Gamma(\tau_i,t)
                   \Bigr)}.
                 \]
                 %\begin{align*}
%                    &\sum_{i=1}^{\Nsw(0,t)}\exp\biggl(\bigl(\Nsw(0,t)-i\bigr)\ln\mu
%                    -(\lambda_s-\lambda_s^*)\TswS(\tau_i,t)\\
%                    &\qquad\qquad\qquad\qquad\qquad+
%                    (\lambda_u-\lambda_u^*)\TswU(\tau_i,t)\biggr)\\
%                    =&\sum_{i=1}^{\Nsw(0,t)}\exp\biggl(\Nsw(\tau_i,t)\ln\mu-(\lambda_s-\lambda_s^*)\TswS(\tau_i,t)
%                    +(\lambda_u-\lambda_u^*)\TswU(\tau_i,t)\biggr).
%                 \end{align*}
                 We have
                 \begin{align*}
                    (\ln\mu)\Nsw(\tau_i,t)+\overline\Gamma(\tau_i,t)
                    &\leq(\lambda_s-\lambda_s^*+\lambda_u-\lambda_u^*)\hat\Delta\\
                    &\quad\quad\quad+\Bigl((\ln\mu)\frac{1}{\delta}-(\lambda_s-\lambda_s^*)
                    +(\lambda_s-\lambda_s^*+\lambda_u-\lambda_u^*)\frac{\hat\Delta}{2\delta}\Bigr)
                    (t-\tau_i).
                 \end{align*}
                 In view of \eqref{e:key_condn3}, the above expression is bounded above by \(\tilde{c}_1-\tilde{c}_2(t-\tau_i)\) for some \(\tilde{c}_1\geq (\lambda_s-\lambda_s^*+\lambda_u-\lambda_u^*)\hat\Delta\) and \(\tilde{c}_2>0\).\\
                 Thus,
                \begin{align*}
                    \displaystyle{\sum_{i=1}^{\Nsw(0,t)}\exp\Bigl(\bigl(\Nsw(0,t)-i\bigr)\ln\mu
                   +\overline\Gamma(\tau_i,t)\Bigr)
                    \leq  \sum_{i=1}^{\Nsw(0,t)}\exp\Bigl(\tilde{c}_1-\tilde{c}_2(t-}{\tau_i)\Bigr)}.
                \end{align*}
                 It is shown in the proof of Theorem \ref{t:mainres} that the above expression is bounded above by 
                 \begin{align*}
                    \exp\bigl(\tilde{c}_1\bigr)\biggl(1+\frac{1}{\exp\tilde{c}_2\delta-1}\biggr) = b\:(\text{say}).
                 \end{align*}
                 Now, 
                \begin{align*}
                    W(t)\leq\exp\Bigl(c_1-c_2t\Bigr)W(0)+\tilde{b}w(t),
                \end{align*}
                 where \(\tilde{b}=(\mu-1)b\). It follows that
                \begin{align*}
                    V_{\sigma(t)}(x(t)) - w(t)\leq\exp\Bigl(c_1-c_2t\Bigr)\Bigl(V_{\sigma(0)}(x(0))-w(0)\Bigr)+\tilde{b}w(t),
                 \end{align*}
                 which implies
                 \begin{align*}
                    V_{\sigma(t)}(x(t))
                    &\leq \exp\Bigl(c_1-c_2t\Bigr)\Biggl(V_{\sigma(0)}(x(0))-w(0)\Biggr)+(1+\tilde{b})w(t)\\
                    &\leq \exp\Bigl(c_1-c_2t\Bigr)\overline{\alpha}_2\Bigl(\norm{x(0)}+\norm{w(0)}\Bigr)+(1+\tilde{b})\norm{w(t)}
                 \end{align*}
                 under the (without loss of generality) assumption that \(\overline{\alpha}_2(r)\geq r\) for all \(r\geq 0\). Using \eqref{e:key-prop1}, we obtain that
                 \begin{align*}
                    \overline{\alpha}_1(\norm{x(t)})\leq\exp\Bigl(c_1-c_2t\Bigr)\overline{\alpha}_2
                    \Bigl(\norm{x(0)}+\norm{w(0)}\Bigr)+(1+\tilde{b})\norm{w(t)},
                 \end{align*}
                 which can be rewritten as
                 \begin{align*}
                    \norm{x(t)} &\leq \overline{\alpha}_1^{-1}\biggl(\exp\Bigl(c_1-c_2t\Bigr)\overline{\alpha}_2\Bigl(\norm{x(0)}+\norm{w(0)}\Bigr)
                    \biggr)
                    +\overline{\alpha}_1^{-1}\biggl((1+\tilde{b})\norm{w(t)}\biggr)\\
                    &=:\overline{\beta}\Bigl(\norm{x_0}+\norm{w_0},t\Bigr)+\overline{\chi}(\norm{w(t)}).
                 \end{align*}
                 %Thus, condition \eqref{e:norm_condn} holds.
%                 
                % This completes our proof of Theorem \ref{t:mainres2}.
           \end{proof}
           System \eqref{e:state_norm1} requires knowledge of exact switching instants and destinations of \(\sigma\in\mathcal{S}'_{\mathcal{R}}\), and is not a robust estimator.
%We note that \eqref{e:state_norm1} is a state-estimator for \eqref{e:swsys} and its design involves precise knowledge of switching instants and destinations of \(\sigma\in\mathcal{S}'_{\mathcal{R}}\).
           \begin{proof}[Proof of Theorem \ref{t:mainres3}]
                In view of Definition \ref{d:ioss-cont}, we need to establish the following two properties of the switched system \eqref{e:state_norm2}:\\
                i) It is ISS with respect to \((v,y)\), and\\
                ii) condition \eqref{e:norm_condn} holds.
                 %\begin{enumerate}[label=\roman*), leftmargin=*]
                    %\item It is ISS with respect to \((v,y)\), and
                    %\item condition \eqref{e:norm_condn} holds.
               % \end{enumerate}
               
               We first verify i). Let \(V_0({z}) = V_1({z}) = \frac{1}{2}{z}^2\). Clearly, Assumption \ref{assump:key1} holds with the rate of decay (resp., growth) of \(V_0\) (resp., \(V_1\)) along the subsystems dynamics \(g_0\) (resp., \(g_1\)) being \(\lambda_s^*\) (resp., \(\lambda_u^*\)). The set of admissible switches is \(\{(0,1),(1,0)\}\), and Assumption \ref{assump:key2} holds with \(V_0({z}) = 1\times V_1({z})\) and \(V_1({z}) = 1\times V_0({z})\).\\
               Let \(0=:\overline\kappa_0<\overline\kappa_1<\cdots\) be the switching instants of \(\zeta\). Let \({\Nsw}_{{z}}(s,t)\), \(\TswS_{{z}}(s,t)\) and \(\TswU_{{z}}(s,t)\) denote the number of switches of \(\zeta\), the total duration of activation of stable subsystems by \(\zeta\) and the total duration of activation of unstable subsystems by \({\zeta}\) on an interval \(]s,t]\subseteq[0,+\infty[\), respectively.\\
               We will follow our proof of Theorem \ref{t:mainres}. Notice that \(\ln\mu = 0\). Define 
               \[
                \Theta(s,t):= -\lambda_s^*\TswS_{{z}}(s,t)
                        +\lambda_u^*\TswU_{{z}}(s,t).
                \]
                We need to show that\\%Following our proof of Theorem \ref{t:mainres}, we need to show that 
               %\begin{enumerate}[label = \Alph*), leftmargin = *]
                    %\item 
                   A) \({\psi}_{1}^{{z}}(t):= \exp\Bigl(\Theta(0,t)\Bigr)\) is bounded above by a function belonging to class \(\mathcal{L}\), and\\
                    %\item 
                   B) \(\displaystyle{\psi_2^{{z}}(t) \leq\frac{1}{\lambda_s^*}\sum_{\substack{{i=0}\\{\zeta(\overline\kappa_i)=0}\\
                        {\overline\kappa_{{\Nsw}_z(0,t)+1}:=t}}}^{{\Nsw}_{{z}}(0,t)}
\exp\Bigl(\Theta(\overline\kappa_{i+1},t)\Bigr)}
{+\frac{1}{\lambda_u^*}\sum_{\substack{{i=0}\\{\zeta(\overline\kappa_i)=1}\\
{\overline\kappa_{{\Nsw}_z(0,t)+1}:=t}}}^{{\Nsw}_{{z}}(0,t)}
\exp\Bigl(\Theta(\overline\kappa_i,t)\Bigr)}\) is bounded above by a constant.

              % \end{enumerate}
               Now, 
               \begin{align*}
                    \Theta(0,t) = -\lambda_s^*t+(\lambda_s^*+\lambda_u^*)\TswU_{{z}}(0,t)
                    &\leq\:\: -\lambda_s^*t + (\lambda_s^*+\lambda_u^*)2\Bigl\lfloor\frac{t}{\tilde\delta+\tilde\Delta}\Bigr\rfloor\frac{1}{2}\tilde\Delta\\
                     &\leq \Biggl(-\lambda_s^*+(\lambda_s^*+\lambda_u^*)\frac{\tilde\Delta}{\tilde\delta+\tilde\Delta}\Biggr)t.
               \end{align*} 
               By \eqref{e:newcondn4}, we have that the above expression is at most equal to \(-\overline{c}t\) for some \(\overline{c}>0\). Thus, \({\psi}_1^{{z}}(t)\leq\exp\bigl(-\overline{c}t\bigr)\). Clearly, the right-hand side of the above inequality is a class \(\mathcal{L}\) function.
               Similarly, 
               \[
                \Theta(\overline\kappa_i,t)\leq\overline{c}(t-\overline\kappa_i)\:\:\text{and}\:\: \Theta(\overline\kappa_{i+1},t)\leq\overline{c}(t-\overline\kappa_{i+1})\:\:\text{for some}\:\: \overline{c}>0.
               \]
               %\begin{align*}
%                    &-\lambda_s^*\overline{\mathrm{T}}^{\mathrm{S}}(\overline\kappa_i,t)
%                        +\lambda_u^*\overline{\mathrm{T}}^{\mathrm{U}}(\overline\kappa_i,t)
%                        +(\ln 1)\overline{\Nsw}(\overline\kappa_i,t)\\
%                    \leq&\:\: \Biggl(-\lambda_s^*+(\lambda_s^*+\lambda_u^*)\frac{\tilde\Delta}{\tilde\delta+\tilde\Delta}\Biggr)
%                    \bigl(t-\overline\kappa_i\bigr)\leq \overline{c}\bigl(t-\overline\kappa_i\bigr)
%               \end{align*} 
%               for some \(\overline{c}>0\), and
%               \begin{align*}
%                    &-\lambda_s^*\overline{\mathrm{T}}^{\mathrm{S}}(\overline\kappa_{i+1},t)
%                        +\lambda_u^*\overline{\mathrm{T}}^{\mathrm{U}}(\overline\kappa_{i+1},t)
%                        +(\ln 1)\overline{\Nsw}(\overline\kappa_{i+1},t)\\
%                    \leq&\:\: \Biggl(-\lambda_s^*+(\lambda_s^*+\lambda_u^*)\frac{\tilde\Delta}{\tilde\delta+\tilde\Delta}\Biggr)
%                    \bigl(t-\overline\kappa_{i+1}\bigr)\leq \overline{c}\bigl(t-\overline\kappa_{i+1}\bigr)
%               \end{align*}
%               for some \(\overline{c}>0\). 
Under a similar set of arguments as employed in our proof of Theorem \ref{t:mainres}, both the sums 
               \(\displaystyle{\sum_{i=0}^{{\Nsw}_{{z}}(0,t)}\exp\biggl(-\overline{c}(t-\overline{\kappa}_i)\biggr)}\) and
               \(\displaystyle{\sum_{i=0}^{{\Nsw}_{{z}}(0,t)}\exp\biggl(-\overline{c}(t-\overline{\kappa}_{i+1})\biggr)}\) are bounded. 
               Consequently, ISS of the switched system \eqref{e:state_norm2} follows.
                 
                We next verify ii). Recall the state-norm estimator \eqref{e:state_norm1} in Lemma \ref{lem:auxres3}. We will employ the following fact in our proof: If the state, \({z}\), of the system \eqref{e:state_norm2} and the state, \(w\), of the system \eqref{e:state_norm1} are related as follows:
                \begin{align}
                \label{e:pf2_step1}
                    \norm{w(t)}\leq c\norm{{z}(t)}\:\text{for all}\:t\geq 0\:\text{and some constant}\:c\geq 1,
                \end{align}
                then the system \eqref{e:state_norm2} satisfies condition (b) in Definition \ref{d:norm_est}. In particular, %it follows that
                \begin{align*}
                    \norm{x(t)}\leq\overline\chi\Bigl(c\norm{{z}(t)}\Bigr)+\overline\beta\Bigl(\norm{x_0}+\norm{{z}_0},t\Bigr)\:\text{for all}\:t\geq 0.
               \end{align*}
                
                We will prove \eqref{e:pf2_step1} by employing mathematical induction. We can rewrite \eqref{e:pf2_step1} as 
                %\begin{align*}
                    \(w(t)\leq c{z}(t)\:\text{for all}\:t\geq 0\:\text{and some constant}\:c\geq 1\)
                %\end{align*}
                because \(w(t)\) and \({z}(t)\) are greater than or equal to \(0\) for all \(t\geq 0\).
                
                Let us choose \(w_0\) and \({z}_0\) such that \(w_0\leq{z}_0\). Assume that 
                \begin{align}
                \label{e:pf2_step2}
                    w\bigl(k(\tilde\delta+\tilde\Delta)\bigr)\leq{z}\bigl(k(\tilde\delta+\tilde\Delta)\bigr)\:
                    \text{for some}\:k\in\N_0.
                \end{align}
                Towards proving that \eqref{e:pf2_step1} holds, it suffices to show that
                \begin{align}
                \label{e:pf2_step3}
                    w\bigl((k+1)(\tilde\delta+\tilde\Delta)\bigr)\leq{z}\bigl((k+1)(\tilde\delta+\tilde\Delta)\bigr),
                \end{align}
                and
                \begin{align}
                \label{e:pf2_step4}
                    w(t)\leq c{z}(t)\:\text{with}\:k(\tilde\delta+\tilde\Delta)\leq t\leq(k+1)(\tilde\delta+\tilde\Delta)
                \end{align}
                for some constant \(c\geq 1\).
                
                We let \(t_1:=k\bigl(\tilde\delta+\tilde\Delta\bigr)\), \(t_2:= k\bigl(\tilde\delta+\tilde\Delta\bigr)+\tilde\delta\) and 
                \(t_3:= (k+1)\bigl(\tilde\delta+\tilde\Delta\bigr)\). Let \(\overline{\gamma}(t) := \gamma_1(\norm{v(t)})+\gamma_2(\norm{y(t)})\).
                
               Consider the interval \([t_1,t_2[\). Here, \({z}\) follows the ISS dynamics, \(g_0\) and \(w\) may follow the ISS dynamics, \(g_0\) and the unstable dynamics, \(g_1\) over disjoint intervals. 
                
                Consider an interval \([t',t''[\), where \(t_1\leq t'\leq t''\leq t_2\), and both \({z}\) and \(w\) follow the ISS dynamics, \(g_0\), and \(w(t')\leq\varepsilon{z}(t')\) for some \(\varepsilon\geq 1\). We have 
               \begin{align*}
                    \frac{d}{dt}\Bigl(w(t)-{z}(t)\Bigr) = g_0\Bigl(w(t),v(t),y(t)\Bigr)-g_0\Bigl({z}(t),v(t),y(t)\Bigr)
                    =-\lambda_s^*\Bigl(w(t)-{z}(t)\Bigr).
                \end{align*}
                It follows that
                \begin{align*}
                    w\bigl(t''\bigr)-{z}\bigl(t''\bigr) &= \exp\Bigl(-\lambda_s^*(t''-t')\Bigr)w\bigl(t'\bigr)-\exp\Bigl(-\lambda_s^*(t''-t')\Bigr){z}\bigl(t'\bigr)\\
                    &\leq\exp\Bigl(-\lambda_s^*\bigl(t''-t'\bigr)\Bigr)(\varepsilon-1)\exp\Bigl(\lambda_s^*\bigl(t''-t'\bigr)\Bigr){z}\bigl(t''\bigr)\\
                    &=(\varepsilon-1){z}\bigl(t''\bigr).
                \end{align*}
                The above inequality can be rewritten as
                \begin{align}
                \label{e:pf2_step5}
                    w\bigl(t''\bigr)\leq\varepsilon{z}\bigl(t''\bigr).
                \end{align}
                Consider an interval \([t'',t'''[\), where \(t_1\leq t''\leq t'''\leq t_2\), and \({z}\) follows the ISS dynamics, \(g_0\), \(w\) follows the unstable dynamics, \(g_1\), and
                %\begin{align*}
                    \(w\bigl(t''\bigr)\leq\overline\varepsilon{z}\bigl(t''\bigr)\:\text{for some}\:\overline\varepsilon\geq 1.\)
                %\end{align*}
                We have
                %\begin{align}
%                \label{e:pf2_step6}
%                    &{z}\bigl(t'''\bigr) = \exp\Bigl(-\lambda_s^*\bigl(t'''-t''\bigr)\Bigr)
%                    \Biggl({z}\bigl(t''\bigr)+\int_{t''}^{t'''}\exp\Bigl(\lambda_s^*\bigl(s-t''\bigr)\Bigr)\overline{\gamma}(s)ds\Biggr)\nonumber\\
%                    &\geq \exp\Bigl(-\lambda_s^*\bigl(t'''-t''\bigr)\Bigr)\Biggl({z}\bigl(t''\bigr)+\int_{t''}^{t'''}\overline{\gamma}(s)ds\Biggr) =:a,\:\text{and}\nonumber\\
%                    %\intertext{and}
%                    &{w}\bigl(t'''\bigr) = \exp\Bigl(\lambda_u^*\bigl(t'''-t''\bigr)\Bigr)\Biggl(w\bigl(t''\bigr)+
%                    \int_{t''}^{t'''}\exp\Bigl(-\lambda_u^*\bigl(s-t''\bigr)\Bigr)\overline{\gamma}(s)ds\Biggr)\nonumber\\
%                    &\leq\exp\Bigl(\lambda_u^*\bigl(t'''-t''\bigr)\Bigr)\Biggl(\overline\varepsilon{z}\bigl(t''\bigr)
%                    +\int_{t''}^{t'''}\overline{\gamma}(s)ds\Biggr)\nonumber\\
%                    &=\exp\Bigl((\lambda_u^*+\lambda_s^*)\bigl(t'''-t''\bigr)\Bigr)
%                    \overline\varepsilon\exp\Bigl(
%                    -\lambda_s^*\bigl(t'''-t''\bigr)\Bigr)\nonumber\\
%                    &\qquad\qquad\qquad\qquad\qquad\qquad\Biggl({z}\bigl(t''\bigr)+\frac{1}{\overline\varepsilon}\int_{t''}^{t'''}\overline{\gamma}(s)ds\Biggr)\nonumber\\
%                    &\leq\exp\Bigl((\lambda_u^*+\lambda_s^*)\bigl(t'''-t''\bigr)\Bigr)\overline\varepsilon a
%                    \leq\exp\Bigl((\lambda_u^*+\lambda_s^*)\bigl(t'''-t''\bigr)\Bigr)\overline\varepsilon{z}\bigl(t'''\bigr).
%                \end{align}
                \begin{align*}
        z(t''') &= \exp\Bigl(-\lambda_s^*(t'''-t'')\Bigr)z(t'')+\int_{t''}^{t'''}\exp\Bigl(-\lambda_s^*(t'''-s)\Bigr)\overline\gamma(s)ds\\
        &= \exp\Bigl(-\lambda_s^*(t'''-t'')\Bigr)z(t'')+\int_{t''}^{t'''}\exp\Bigl(-\lambda_s^*(t'''-t'')+\lambda_s^*(s-t'')\Bigr)\overline\gamma(s)ds\\
        &= \exp\Bigl(-\lambda_s^*(t'''-t'')\Bigr)\Biggl(z(t'')+\int_{t''}^{t'''}\exp\Bigl(\lambda_s^*(s-t'')\Bigr)\overline\gamma(s)ds\Biggr)\\
        &\geq \exp\Bigl(-\lambda_s^*(t'''-t'')\Bigr)\Biggl(z(t'')+\int_{t''}^{t'''}\overline\gamma(s)ds\Biggr),
     \end{align*}
     and
     \begin{align}
     \label{e:pf2_step6}
        w(t''') &= \exp\Bigl(\lambda_u^*(t'''-t'')\Bigr)w(t'')+\int_{t''}^{t'''}\exp\Bigl(\lambda_u^*(t'''-s)\Bigr)\overline\gamma(s)ds\nonumber\\
        &= \exp\Bigl(\lambda_u^*(t'''-t'')\Bigr)w(t'')+\int_{t''}^{t'''}\exp\Bigl(\lambda_u^*(t'''-t'')-\lambda_u^*(s-t'')\Bigr)\overline\gamma(s)ds\nonumber\\
        &= \exp\Bigl(\lambda_u^*(t'''-t'')\Bigr)\Biggl(w(t'')+\int_{t''}^{t'''}\exp\Bigl(-\lambda_u^*(s-t'')\Bigr)\overline\gamma(s)ds\Biggr)\nonumber\\
        &\leq \exp\Bigl(\lambda_u^*(t'''-t'')\Bigr)\Biggl(w(t'')+\int_{t''}^{t'''}\overline\gamma(s)ds\Biggr)\nonumber\\
        &\leq \exp\Bigl(\lambda_u^*(t'''-t'')\Bigr)\Biggl(\overline\varepsilon z(t'')+\int_{t''}^{t'''}\overline\gamma(s)ds\Biggr)\nonumber\\
        &= \exp\Bigl(\lambda_u^*(t'''-t'')\Bigr)\Biggl(\overline\varepsilon z(t'')+\overline\varepsilon\frac{1}{\overline\varepsilon}\int_{t''}^{t'''}\overline\gamma(s)ds\Biggr)\nonumber\\
        &= \exp\Bigl(\lambda_u^*(t'''-t'')+\lambda_s^*(t'''-t'')-\lambda_s^*(t'''-t'')\Bigr)\overline\varepsilon\Biggl( z(t'')+\frac{1}{\overline\varepsilon}\int_{t''}^{t'''}\overline\gamma(s)ds\Biggr)\nonumber\\
        &\leq \exp\Bigl(\lambda_s^*+\lambda_u^*\Bigr)(t'''-t'')\overline\varepsilon\exp\Bigl(-\lambda_s^*(t'''-t'')\Bigr)\Biggl(z(t'')+\int_{t''}^{t'''}\overline\gamma(s)ds\Biggr)\nonumber\\
        &\leq \exp\Bigl((\lambda_s^*+\lambda_u^*)(t'''-t'')\Bigr)\overline\varepsilon z(t'''),
     \end{align}
                From \eqref{e:pf2_step2}, we have that \(w(t_1)\leq{z}(t_1)\). An iteration of \eqref{e:pf2_step5} and \eqref{e:pf2_step6} leads us to
                \begin{align}
                \label{e:pf2_step7}
                    w(t_2)\leq\exp\Bigl((\lambda_u^*+\lambda_s^*)\TswU(t_1,t_2)\Bigr){z}(t_2).
                \end{align}
                
                Now, consider the interval \([t_2,t_3[\). Here, \({z}\) follows the unstable dynamics, \(g_1\) and \(w\) may follow the ISS dynamics, \(g_0\) and the unstable dynamics, \(g_1\) over disjoint intervals. We have
                \begin{align}
                \label{e:pf2_step8}
                    \TswU_w(t_2,t_3) &= \TswU(t_2,t_3) \leq \NswU(t_2,t_3)\hat\Delta
                    \leq \frac{\tilde\Delta}{2\delta}\hat\Delta,\\
                    \intertext{and}
                    \label{e:pf2_step10}\TswU_w(t_1,t_2) &= \TswU(t_1,t_2) \leq \NswU(t_1,t_2)\hat\Delta
                    \leq\frac{\tilde\delta}{2\delta}\hat\Delta.
                \end{align}
                
                Suppose that there are \(\ell\) switching instants of \(\sigma\) during the interval \(]t_2,t_3[\). Assume without loss of generality that these are the switching instants \(\tau_{r+1}\), \(\tau_{r+2}\), \(\ldots,\tau_{r+\ell}\), where \(r\in\N_0\). Further, without loss of generality, let \(\tau_r:= t_2\) and \(\tau_{r+\ell+1}:=t_3\). Let 
                \begin{align*}
                    \mathcal{T}_1 := \bigl\{k\in\{0,1,\ldots,\ell\}\:|\:\zeta(t) = 0\:\text{for}\:t\in[\tau_{r+k},\tau_{r+k+1}[\bigr\},\\
                    \intertext{and}                      
                    \mathcal{T}_2 := \bigl\{k\in\{0,1,\ldots,\ell\}\:|\:\zeta(t) = 1\:\text{for}\:t\in[\tau_{r+k},\tau_{r+k+1}[\bigr\}.
                \end{align*}
                We have 
                %\begin{align}
                    \begin{align}
                    \label{e:pf2_step11}
        z(t_3) &= \exp\Bigl(\lambda_u^*(t_3-t_2)\Bigr)\Biggl(z(t_2)+\int_{t_2}^{t_3}\exp\Bigl(-\lambda_u^*(s-t_2)\Bigr)\overline\gamma(s)ds\Biggr)\nonumber\\
        &= \exp\Bigl(\lambda_u^*(t_3-t_2)\Bigr)z(t_2)+\sum_{k\in\mathcal{T}_1}\exp\Bigl(\lambda_u^*(t_3-\tau_{r+k+1})\Bigr)\int_{\tau_{r+k}}^{\tau_{r+k+1}}\exp\Bigl(\lambda_u^*(\tau_{r+k+1}-s)\Bigr)\overline\gamma(s)ds\nonumber\\
        &\quad\quad\quad\quad+\sum_{k\in\mathcal{T}_2}\exp\Bigl(\lambda_u^*(t_3-\tau_{r+k+1})\Bigr)\int_{\tau_{r+k}}^{\tau_{r+k+1}}\exp\Bigl(\lambda_u^*(\tau_{r+k+1}-s)\Bigr)\overline\gamma(s)ds\nonumber\\
        &=  \exp\Bigl(\lambda_u^*(t_3-t_2)\Bigr)z(t_2)+\sum_{k\in\mathcal{T}_1}\exp\Bigl(\lambda_u^*(t_3-\tau_{r+k+1})\Bigr)\int_{\tau_{r+k}}^{\tau_{r+k+1}}\exp\Bigl(\lambda_u^*(\tau_{r+k+1}-s)\Bigr)\overline\gamma(s)ds\nonumber\\
        &\quad\quad\quad\quad+\sum_{k\in\mathcal{T}_2}\int_{\tau_{r+k}}^{\tau_{r+k+1}}\exp\Bigl(\lambda_u^*(t_3-\tau_{r+k})-\lambda_u^*(s-\tau_{r+k})\Bigr)\overline\gamma(s)ds\nonumber\\
        &\geq  \exp\Bigl(\lambda_u^*(t_3-t_2)\Bigr)z(t_2)+\sum_{k\in\mathcal{T}_1}\exp\Bigl(\lambda_u^*(t_3-\tau_{r+k+1})\Bigr)\int_{\tau_{r+k}}^{\tau_{r+k+1}}\overline\gamma(s)ds\nonumber\\
        &\quad\quad\quad\quad+\sum_{k\in\mathcal{T}_2}\exp\Bigl(\lambda_u^*(t_3-\tau_{r+k})\Bigr)\int_{\tau_{r+k}}^{\tau_{r+k+1}}\exp\Bigl(-\lambda_u^*(s-\tau_{r+k})\Bigr)\overline\gamma(s)ds\nonumber\\
        &= \exp\Bigl(\lambda_u^*(t_3-t_2)\Bigr)z(t_2)+a_1+a_2,
     \end{align}
     and
     \begin{align}
     \label{e:pf2_step11}
        w(t_3) &= \exp\Bigl(-\lambda_s^*\mathrm{T}^{\mathrm{S}}(t_2,t_3)+\lambda_u^*\mathrm{T}^{\mathrm{U}}(t_2,t_3)\Bigr)w(t_2)
        + \sum_{k=0}^{\ell}\exp\Bigl(-\lambda_s^*\mathrm{T}^{\mathrm{S}}(\tau_{r+k+1},t_3)+\lambda_u^*\mathrm{T}^{\mathrm{U}}(\tau_{r+k+1},t_3)\Bigr)\nonumber\\
        &\quad\quad\times\int_{\tau_{r+k}}^{\tau_{r+k+1}}\exp\Bigl(\mathrm{1}_{\{\sigma(\tau_{r+k})\in\mathcal{P}_S\}}(-\lambda_s^*)(\tau_{r+k+1}-s)
        +\mathrm{1}_{\{\sigma(\tau_{r+k})\in\mathcal{P}_U\}}(\lambda_u^*)(\tau_{r+k+1}-s)\Bigr)\overline\gamma(s)ds\nonumber\\
        &= \exp\Bigl(-\lambda_s^*\mathrm{T}^{\mathrm{S}}(t_2,t_3)+\lambda_u^*\mathrm{T}^{\mathrm{U}}(t_2,t_3)\Bigr)w(t_2)\nonumber\\
        &\:\:+\sum_{k\in\mathcal{T}_1}\exp\Bigl(-\lambda_s^*\mathrm{T}^{\mathrm{S}}(\tau_{r+k+1},t_3)+\lambda_u^*\mathrm{T}^{\mathrm{U}}(\tau_{r+k+1},t_3)\Bigr)
        \times\int_{\tau_{r+k}}^{\tau_{r+k+1}}\exp\Bigl(-\lambda_s^*(\tau_{r+k+1}-s)\Bigr)\overline\gamma(s)ds\nonumber\\
        &\:\:+ \sum_{k\in\mathcal{T}_2}\exp\Bigl(-\lambda_s^*\mathrm{T}^{\mathrm{S}}(\tau_{r+k+1},t_3)+\lambda_u^*\mathrm{T}^{\mathrm{U}}(\tau_{r+k+1},t_3)\Bigr)
        \times\int_{\tau_{r+k}}^{\tau_{r+k+1}}\exp\Bigl(\lambda_u^*(\tau_{r+k+1}-s)\Bigr)\overline\gamma(s)ds\nonumber\\
        &= \exp\Bigl(-\lambda_s^*\mathrm{T}^{\mathrm{S}}(t_2,t_3)+\lambda_u^*\mathrm{T}^{\mathrm{U}}(t_2,t_3)\Bigr)w(t_2)\nonumber\\
        &\:\:+\sum_{k\in\mathcal{T}_1}\exp\Bigl(-\lambda_s^*\mathrm{T}^{\mathrm{S}}(\tau_{r+k+1},t_3)+\lambda_u^*\mathrm{T}^{\mathrm{U}}(\tau_{r+k+1},t_3)-\lambda_s^*(\tau_{r+k+1}-\tau_{r+k})\Bigr)\nonumber\\
        &\hspace*{2cm}\times\int_{\tau_{r+k}}^{\tau_{r+k+1}}\exp\Bigl(\lambda_s^*(s-\tau_{r+k})\Bigr)\overline\gamma(s)ds\nonumber\\
        &\:\:+\sum_{k\in\mathcal{T}_2}\exp\Bigl(-\lambda_s^*\mathrm{T}^{\mathrm{S}}(\tau_{r+k+1},t_3)+\lambda_u^*\mathrm{T}^{\mathrm{U}}(\tau_{r+k+1},t_3)+\lambda_u^*(\tau_{r+k+1}-\tau_{r+k})\Bigr)\nonumber\\
        &\hspace*{2cm}\times\int_{\tau_{r+k}}^{\tau_{r+k+1}}\exp\Bigl(-\lambda_u^*(s-\tau_{r+k})\Bigr)\overline\gamma(s)ds\nonumber\\
        &\leq \exp\Bigl(-\lambda_s^*\mathrm{T}^{\mathrm{S}}(t_2,t_3)+\lambda_u^*\mathrm{T}^{\mathrm{U}}(t_2,t_3)\Bigr)w(t_2) + a_2 + a_1.
     \end{align}
                 Now,
                 \begin{align*}
                    {z}(t_3)-w(t_3) &\geq \exp\Bigl(\lambda_u^*(t_3-t_2)\Bigr){z}(t_2)
                    -\exp\Bigl(\Gamma^*(t_2,t_3)\Bigr)w(t_2)\\
                    &\geq \Biggl(\exp\Bigl(\lambda_u^*(t_3-t_2)\Bigr)
                    -\exp\Bigl(\Gamma^*(t_2,t_3)+(\lambda_s^*+\lambda_u^*)\TswU(t_1,t_2)\Bigr)\Biggr){z}(t_2)\\
                    &\geq \Biggl(\exp\Bigl(\lambda_u^*(t_3-t_2)\Bigr)
                    -\exp\Bigl(-\lambda_s^*(t_3-t_2)
                    +(\lambda_s^*+\lambda_u^*)\TswU(t_2,t_3)
                   +(\lambda_s^*+\lambda_u^*)\TswU(t_1,t_2)\Bigr)\Biggr){z}(t_2).
                 \end{align*}
                 Applying \eqref{e:pf2_step8}-\eqref{e:pf2_step10}, we obtain that the right-hand side of the above inequality is at least equal to
                 \begin{align*}
                 %\label{e:pf2_step13}
                    \Biggl(\exp\Bigl(\lambda_u^*\tilde\Delta\Bigr)-\exp\biggl(-\lambda_s^*\tilde\Delta
                    +(\lambda_s^*+\lambda_u^*)\frac{\tilde\Delta}{2\delta}\hat\Delta
                    +(\lambda_s^*+\lambda_u^*)\frac{\tilde\delta}{2\delta}\hat\Delta\biggr)\Biggr){z}(t_2).
                 \end{align*}
                 Since \({z}(t_2)\geq 0\), non-negativity of the above expression is ensured by 
                 condition \eqref{e:newcondn3}. %Indeed, 
%                 \(\displaystyle{
%                    (\lambda_s^*+\lambda_u^*)\Biggl(\frac{\tilde\Delta\hat\Delta}{2\delta}
%                    +\frac{\tilde\delta\hat\Delta}{2\delta}-\tilde\Delta\Biggr) \leq 0
%                 }\)
%                 implies that 
%                 \begin{align*}
%                     -\lambda_s^*\tilde\Delta + (\lambda_s^*+\lambda_u^*)\Biggl(\frac{\tilde\Delta\hat\Delta}{2\delta}
%                    +\frac{\tilde\delta\hat\Delta}{2\delta}-\tilde\Delta\Biggr) - \lambda_u^*\tilde\Delta \leq 0.
%                 \end{align*}
                 
                 Therefore, we have shown that \eqref{e:pf2_step3} and \eqref{e:pf2_step4} are true. The constant \(c\) in \eqref{e:pf2_step4} can be computed by employing \eqref{e:pf2_step7} and the upper bound on \(\TswU(t_1,t_2)\) as:
                 %\begin{align}
                 %\label{e:pf2_step14}
                    \(c = \exp\Biggl((\lambda_u^*+\lambda_s^*)\biggl(\frac{\check\delta\hat\Delta}{2\delta}+\hat\Delta\biggr)\Biggr).
                    \)
                 %\end{align}
                 We note that computing \(c\) from \eqref{e:pf2_step7} is no loss of generality as the worst-case ratio \(\displaystyle{\frac{w(t)}{{z}(t)}}\) can be calculated at \(t=t_2\). Indeed, during the interval \([t_2,t_3[\), the ratio \(\displaystyle{\frac{w(t)}{{z}(t)}}\) decreases. Here, \({z}\) follows the unstable dynamics, \(g_1\) and \(w\) follows the ISS dynamics, \(g_0\) and the unstable dynamics, \(g_1\) over disjoint intervals. We have 
                 \begin{align*}
                    \frac{d}{dt}\Bigl(w(t)-{z}(t)\Bigr) \leq \lambda_u^*\Bigl(w(t)-{z}(t)\Bigr).
                 \end{align*}
                 If \(w(t_2) = \tilde\varepsilon{z}(t_2)\) for some \(\tilde{\varepsilon}\), then it follows that for any \(t\in[t_2,t_3[\), 
                 \begin{align*}
                    w(t)-{z}(t) = \exp\Bigl(\lambda_u^*(t-t_2)\Bigr)\Bigl(w(t_2)-{z}(t_2)\Bigr)
                    \leq \exp\Bigl(-\lambda_s^*(t-t_2)\Bigr)(\varepsilon-1){z}(t_2)
                    \leq(\varepsilon-1){z}(t_2),
                 \end{align*}
                 which can be rewritten as \(w(t)\leq\tilde\varepsilon{z}(t)\), i.e., the ratio \(\displaystyle{\frac{w(t)}{{z}(t)}}\) is smaller than or equal to the ratio \(\displaystyle{\frac{w(t_2)}{{z}(t_2)}}\).
                 
                 We conclude that the system \eqref{e:state_norm2} is a state-norm estimator for the switched system \eqref{e:swsys} operating under \(\sigma\in\Sw'_{\mathcal{R}}\).
           \end{proof}
%==============================================================
%==============================================================
%\bibliographystyle{siam}
%\bibliography{refr_ak}

\end{document}